\newcommand{\FigureSwitch}{on}
\newcommand{\CommentSwitch}{on}
\newcommand{\eop}{\hspace*{\fill}~$\square$} 
\theoremstyle{plain}
\numberwithin{equation}{section}
\newtheorem{theorem}{Theorem}[section]
\newtheorem{proposition}[theorem]{Proposition}
\newtheorem{lemma}[theorem]{Lemma}
\theoremstyle{definition}
\newtheorem{Remark}[theorem]{Remark}
\newcommand{\RootPath}{.}
\newcommand{\ExternalFiguresPath}{\RootPath/figures}
\newcommand{\mycaption}[1]{\centering{\vspace{\medskipamount}\refstepcounter{figure}\textbf{Figure~\thefigure.} {#1}}}
\newcommand{\natur}{\ensuremath{\mathbb{N}}}
\newcommand{\real}{\ensuremath{\mathbb{R}}}
\newcommand{\sprod}[2]{\left<#1,#2\right>}
\newcommand{\bigsprod}[2]{\bigl<#1,#2\bigr>}
\newcommand{\setcond}[2]{\left\{ #1 : #2 \right\}} 
\newcommand{\setcondbegin}[2]{\left\{ #1 : #2 \right.} 
\newcommand{\setcondend}[1]{\left.  #1 \right\}} 
\newcommand{\bigsetcond}[2]{\bigl\{ #1 \,  :\, #2 \bigr\}}
\newenvironment{FigTab}[2]{
	\begin{figure}[htb]
	\setlength{\unitlength}{#2}
	\begin{center}
	\begin{tabular}{#1}
}{
    \end{tabular}
    \end{center}
    \end{figure}
}
\newcommand{\IncludeGraph}[2]{
\ifthenelse{\equal{\FigureSwitch}{on}}{
	\includegraphics[#1]{\ExternalFiguresPath/{#2}}
	}{
		\fbox{\texttt{#2}}
	}
}
\newcommand{\conv}{\mathop{\mathrm{conv}}\nolimits}
\newcommand{\aff}{\mathop{\mathrm{aff}}\nolimits}
\newcommand{\cone}{\mathop{\mathrm{cone}}\nolimits}
\newcommand{\rec}{\mathop{\mathrm{rec}}\nolimits}
\newcommand{\homog}{\mathop{\mathrm{hom}}\nolimits}
\newcommand{\diam}{\mathop{\mathrm{diam}}\nolimits}
\newcommand{\intr}{\mathop{\mathrm{int}}\nolimits}
\newcommand{\relint}{\mathop{\mathrm{relint}}\nolimits}
\newcommand{\E}{\mathop{\mathbb{R}}\nolimits} 
\newcommand{\ortproj}{\mathbin{|}}
\newcommand{\dotvar}{\,\cdot\,} 
\newcommand{\MxL}{\left[} 
\newcommand{\MxR}{\right]} 
\newcommand{\ThmTitle}[2][]{\ifthenelse{\equal{#1}{}}{\emph{(#2)}}{\emph{(#2; #1)}}}
\newcommand{\notion}[2][]{\emph{#2}\xspace} 
\newcommand{\calF}{\mathcal{F}}
\newcommand{\calH}{\mathcal{H}}
\newcommand{\calA}{\mathcal{A}}
\newcommand{\calI}{\mathcal{I}}
\newcommand{\calC}{\mathcal{C}}
\newcommand{\uball}{\mathbb{B}}
\newcommand{\eps}{\varepsilon}
\newcommand{\vx}{\mathop{\mathrm{vert}}}
\newcommand{\mycite}[2]{\ifthenelse{\equal{#2}{}}{\cite{#1}}{\cite[#2]{#1}}\xspace}
\newcommand{\BosseGroetschelHenk}[1][]{\mycite{MR2166533}{#1}}
\newcommand{\Bernig}[1][]{\mycite{Bernig98}{#1}}
\newcommand{\Ziegler}[1][]{\mycite{MR1311028}{#1}}
\newcommand{\RAGbook}[1][]{\mycite{MR1659509}{#1}}
\newcommand{\BasuPollackRoy}[1][]{\mycite{MR2248869}{#1}}
\newcommand{\AveHenkSimPol}[1][]{\mycite{AveHenkRepSimplePolytopes}{#1}}
\newcommand{\SchnBk}[1][]{\mycite{MR94d:52007}{#1}} 
\newcommand{\comment}[1]{\ifthenelse{\equal{\CommentSwitch}{on}}{/* #1 */}{}}
\begin{document}
\title{Three-Dimensional Polyhedra can be Described \\ by Three  Polynomial Inequalities\footnote{Work supported by the German Research Foundation within the Research Unit 468 ``Methods from Discrete Mathematics for the Synthesis and Control of Chemical Processes''.}}
\date{}
\author{\small Gennadiy Averkov and Martin Henk}
\maketitle

\begin{abstract} 
	Bosse et al. conjectured that for every natural number $d \ge 2$ and every $d$-dimensional polytope $P$ in $\real^d$ there exist $d$ polynomials $p_0(x),\ldots,p_{d-1}(x)$ satisfying $P=\setcondbegin{x \in \real^d}{}$ $\setcondend{p_0(x) \ge 0, \ldots, p_{d-1}(x) \ge 0}.$ We show that for dimensions $d \le 3$ even every $d$-dimensional polyhedron can be described by $d$ polynomial inequalities. The proof of our result is constructive.
\end{abstract} 

\newtheoremstyle{itsemicolon}{}{}{\mdseries\rmfamily}{}{\itshape}{:}{ }{}
\newtheoremstyle{itdot}{}{}{\mdseries\rmfamily}{}{\itshape}{.}{ }{}
\theoremstyle{itdot}
\newtheorem*{msc*}{2000 Mathematics Subject Classification} 

\begin{msc*}
  Primary: 14P05, 52B11, 14Q99;  Secondary: 52A20
\end{msc*}

\newtheorem*{keywords*}{Key words and phrases}

\begin{keywords*}
{\L}ojasiewicz's Inequality; polynomial; polytope; semi-algebraic set; Theorem of Br\"{o}cker and Scheiderer.
\end{keywords*}

\section{Introduction} 

 A subset $S$ of $\real^d$ ($d \in \natur$) is said to be an \emph{elementary closed semi-algebraic set} in $\real^d$ if 
\begin{equation} \label{S:def}
	S= (q_1,\ldots,q_m)_{\ge 0}:= \setcond{x \in \real^d}{q_1(x) \ge 0, \ldots, q_m(x) \ge 0}
\end{equation}
for some $m \in \natur$ and polynomials $q_1(x),\ldots,q_m(x)$  over $\real$.  Every elementary closed semi-algebraic set $S$ in $\real^d$ can be represented by $d \, (d+1)/2$ polynomial inequalities, i.e., there exist polynomials $p_1(x),\ldots,p_n(x)$ with $n \le d \,(d+1)/2$ such that 
\begin{equation}
	S=(p_1,\ldots,p_n)_{\ge 0}.
\end{equation} 
This follows from the well-known result of Br\"ocker and Scheiderer; see \RAGbook[Sections~6.5 and 10.4], \cite[p.~143]{MR1393194}. However, the arguments of Br\"ocker and Scheiderer are highly non-constructive, so that not much is known about the relationship of the polynomials $q_1(x),\ldots,q_m(x)$ and $p_1(x),\ldots,p_n(x)$, not to mention possible algorithms of determination of $p_1(x),\ldots,p_n(x)$ from $q_1(x),\ldots,q_m(x)$. One can consider the following more specific problem. Given a special class $\mathcal{S}$ of elementary closed semi-algebraic sets in $\real^d$, find the minimal $n$ such that every semi-algebraic set $S$ from $\mathcal{S}$ can be represented by $n$ polynomials $p_1(x),\ldots,p_n(x)$ and, moreover, find algorithms for determination of $p_1(x),\ldots,p_n(x)$ from $q_1(x),\ldots,q_m(x)$; see also \cite{Henk06PolRep}, \cite{AveRepElemSemiAlg}.

Results on representations of semi-algebraic sets are relevant for several research areas including polynomial and semi-definite optimization; see \cite{LaurentPolyOptimization}, \cite{HeltonNie08}. In this manuscript we are concerned with polynomial representations of polyhedra. A non-empty subset $P$ of $\real^d$ is said to be a \notion{polyhedron} if $P$ is the intersection of a finite number of closed halfspaces; see \cite{MR1311028}, \cite{MR2335496}. Furthermore, $P$ is called a \notion{polytope} if it is a convex hull of a finite point set in $\real^d$. It is well-known that polytopes can be characterized as bounded polyhedra.  The main result of the paper is the following theorem.

\begin{theorem} \label{main:thm} Let $d \in \{2,3\}$. Then there exists an algorithm that takes a $d$-dimensional polyhedron $P$  in $\real^d$ and constructs $d$ polynomials $p_0(x),\ldots,p_{d-1}(x) \in \real[x]$ satisfying $P=(p_0,\ldots,p_{d-1})_{\ge 0}$. \eop
\end{theorem}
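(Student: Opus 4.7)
The bounded case is handled by \Bernig\ for $d=2$ and \BosseGroetschelHenk\ for $d=3$, in both cases via explicit constructions. My goal would therefore be to extend these constructions to unbounded polyhedra while keeping the number of polynomials equal to $d$; since the polytope results are algorithmic, the extension will itself be algorithmic provided I carry it out constructively.

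The first step is to put $P$ in a standard position. Write $P=\bigcap_{i=1}^m \{\ell_i \ge 0\}$ with affine $\ell_i$. If $P$ contains a line, decompose $P = L \times P'$ for a linear subspace $L$ of dimension $k=\dim(\lin \rec(P)) \ge 1$ and a line-free polyhedron $P'$ of dimension $d-k \le 2$; then $L$ costs no polynomial and $P'$ is handled by a lower-dimensional instance of the theorem. Assume henceforth that $P$ is line-free. Then one can explicitly compute an affine function $\ell_0$ with $\ell_0 > 0$ on $P$ and $P \cap \{\ell_0 \le c\}$ compact for every $c>0$ (any linear form strictly positive on $\rec(P)\withoutorigin$ does the job). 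For $c$ large enough, the polytope $Q := P \cap \{\ell_0 \le c\}$ contains every bounded facet of $P$ as a facet, and applying the polytope theorem produces $d$ polynomials $q_0,\ldots,q_{d-1}$ with $Q=(q_0,\ldots,q_{d-1})_{\ge 0}$.

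The heart of the argument is to modify each $q_i$ into a polynomial $p_i$ describing $P$ rather than $Q$. The natural ansatz is $p_i := q_i \cdot A_i - (\ell_0-c)^{N_i} \cdot B_i$, where $A_i$ and $B_i$ are polynomials constructed from the $\ell_j$ and non-negative on $P$, and $N_i$ is a large even exponent. On $\{\ell_0 \le c\}$ the polytope part $q_i A_i$ dominates and reproduces the sign of $q_i$; on $\{\ell_0 \ge c\}$ the correction term $(\ell_0-c)^{N_i} B_i$ must take over so as to force at least one $p_i$ to be strictly negative outside $P$, while remaining compatible with non-negativity on $P \cap \{\ell_0 \ge c\}$.

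\textbf{Main obstacle.} The hard step is balancing the correction across the ``horizon'' $\{\ell_0 = c\}$: near a facet of $P$ that extends to infinity, the polytope polynomial $q_i$ and the correction $(\ell_0-c)^{N_i} B_i$ must fit together into a single polynomial that is exactly non-negative on $P$, which is delicate because the unbounded facets meet the horizon in a lower-dimensional set where several $q_i$ typically vanish simultaneously. The plan for this is to invoke {\L}ojasiewicz's inequality to obtain effective quantitative estimates on how fast products of facet-defining linear functions vanish near $\bd P$ compared to how fast they grow at infinity; these estimates then determine the exponents $N_i$ algorithmically. I expect the case $d=3$ to require a further case split according to $\dim \rec(P)$ and the combinatorial type of the unbounded two-dimensional faces, since the interaction of $\partial Q$ with the horizon is genuinely more intricate there than in the planar case.
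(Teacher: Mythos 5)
There are two genuine gaps here, one of premise and one of substance.

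First, your starting point is not available: \BosseGroetschelHenk do \emph{not} prove that a $3$-polytope can be described by three polynomial inequalities — they prove a weaker bound and only \emph{conjecture} that $d$ inequalities suffice for $d$-polytopes. Establishing that conjecture for $3$-polytopes is the main technical content of the paper (the cone polynomials $b_{v,l}$, the weighted sum $p_1=\sum_v f_v(x)^{2k}b_{v,l}(x)$, the neighborhood system $U(P,\eps,\rho)$ and Proposition~\ref{U:prop}). Only the planar bounded case may be quoted from \Bernig. So even before the unbounded reduction, the hardest part of the theorem is assumed rather than proved.

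Second, even granting the bounded case, the truncation ansatz $p_i := q_i A_i - (\ell_0-c)^{N_i} B_i$ with $A_i,B_i\ge 0$ on $P$ and $N_i$ even cannot be made to work, and the obstruction is a sign obstruction at infinity, not the quantitative horizon-matching you flag. Since $Q=P\cap\{\ell_0\le c\}$ is compact and $P$ is not, the open set $U:=\intr P\cap\{\ell_0>c\}$ is non-empty and disjoint from $Q$, so every $x\in U$ has $q_i(x)<0$ for some $i$; by finiteness some fixed index $i$ satisfies $q_i<0$ on a non-empty open $V\subseteq U$. On $V$ the requirement $p_i\ge 0$ reads $q_iA_i\ge(\ell_0-c)^{N_i}B_i\ge 0$ while $q_iA_i\le 0$, forcing $A_i=B_i=0$ on $V$ and hence identically; then $p_i\equiv 0$ and you are left with $d-1$ inequalities for a $d$-polyhedron, which fails already for a pointed cone in the plane. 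No choice of exponents $N_i$ and no {\L}ojasiewicz estimate repairs this. The paper's reduction is entirely different: after splitting off the lineality space (as you do), it embeds the line-free $P$ isometrically into $\real^{d+1}$ with $o\notin\aff P$, forms the pointed cone $\homog(P)=\cone(P)\cup\rec(P)$, slices it by a hyperplane $H'$ to obtain a \emph{bounded} polytope $P'$ with $\cone(P')=\homog(P)$, applies the bounded-case construction to $P'$ with the additional homogeneity condition $\calH(P')$ on the leading polynomial, forces all degrees to be even by an auxiliary affine factor, homogenizes, and intersects with $\aff P$. The recession directions of $P$ thereby become ordinary boundary points of the compact $P'$, so no balancing across a horizon ever arises.
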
 

Bosse, Gr\"otschel, and Henk \BosseGroetschelHenk conjectured  that, for every $d \ge 2$, $d$ polynomial inequalities suffice for representing a $d$-dimensional polytope. Thus, Theorem~\ref{main:thm} confirms the above mentioned conjecture for $d \le 3$. We remark that the conjecture has recently been confirmed for simple polytopes of any dimension; see \cite{AveHenkRepSimplePolytopes}, \cite{AveRepElemSemiAlg}. Recall that a $d$-dimensional polytope $P$ is said to be \notion[simple polytope]{simple} if every vertex of $P$ is contained in precisely $d$ facets.

The paper is organized as follows.  Section~\ref{sect:lyrics} contains a sketch of the construction of the polynomials $p_0(x), \ldots, p_{d-1}(x)$ from Theorem~\ref{main:thm} and indicates some proof ideas. Sections~\ref{prelim:real:alg:geom} and \ref{sect:not:conv} present preliminaries from real algebraic geometry and convex geometry, respectively. In Section~\ref{sect:decomp:neighb} we introduce and study a special family of neighborhoods of a given convex polytope, which is used in the main proof. Section~\ref{sect:approx} is concerned with approximation of polytopes and polyhedral cones by algebraic surfaces. In Section~\ref{sect:proof} we prove Theorem~\ref{main:thm}.

\section{A sketch of the construction} \label{sect:lyrics}

We give a sketch of the construction of the polynomials $p_0(x),\ldots,p_{d-1}(x)$ from Theorem~\ref{main:thm}. Standard notions from convexity that are used in this section are introduced in Section~\ref{prelim:real:alg:geom} or can be found in the monographs \SchnBk, \cite{MR2335496}, \Ziegler.

Let $d \le 3$ and let $P$ be a $d$-dimensional polyhedron in $\real^d.$

\subsection{The case of polygons}

Consider the case when $P$ is a convex polygon in $\real^2$ (i.e., $d=2$ and $P$ is bounded). Let $m$ be the number of edges of $P.$ Then $P=\setcond{x \in \real^d}{q_1(x) \ge 0,\ldots, q_m(x) \ge 0}$ for appropriate affine functions $q_1(x),\ldots,q_m(x).$ We define $p_2(x) := q_1(x) \cdot \ldots \cdot q_m(x).$ One can construct a strictly concave polynomial $p_0(x)$ vanishing on each vertex of $P$. A possible construction of $p_0(x)$ was first given by Bernig; see \Bernig. The set $(p_0)_{\ge 0}$ is a strictly convex body whose boundary contains all vertices of $P$. It is intuitively clear that for $p_0(x)$ and $p_1(x)$ as above the equality $P=(p_0,p_1)_{\ge 0}$ is fulfilled; see also Fig.~\ref{simp:polyt:2d:fig}.

	\begin{FigTab}{c}{0.7mm}
	\begin{picture}(200,50)
	\put(60,2){\IncludeGraph{width=50\unitlength}{BernigSetP1.eps}}
	\put(0,2){\IncludeGraph{width=50\unitlength}{BernigSetP0.eps}}
	\put(150,2){\IncludeGraph{width=50\unitlength}{BernigP.eps}}
	\put(125,25){\IncludeGraph{width=12\unitlength}{RightArrow.eps}}
	\put(20,25){\scriptsize $(p_0)_{\ge 0}$}
	\put(80,25){\scriptsize $(p_1)_{\ge 0}$}
	\put(160,49){\scriptsize $P=(p_0,p_1)_{\ge 0}$}
	\end{picture}
	\\
	\parbox[t]{0.98\textwidth}{\mycaption{Representing a polygon by two polynomial inequalities\label{simp:polyt:2d:fig}}} 
	\end{FigTab}

\subsection{The case of $3$-polytopes}

Consider the case when $P$ is a 3-polytope in $\real^3$ (i.e., $d=3$ and $P$ is bounded). Let $m$ be the number of facets of $P$. Then $P=\setcond{x \in \real^d}{q_1(x) \ge 0,\ldots, q_m(x) \ge 0}$ for appropriate affine functions $q_1(x),\ldots,q_m(x).$ We define $p_2(x):= q_1(x) \cdot \ldots \cdot q_m(x).$ Further on, we construct a strictly concave polynomial $p_0(x)$ vanishing on each vertex of $P$ and such that the distance between $P$ and $(p_0)_{\ge 0}$ is small enough; see also Fig.~\ref{octahedr:fig}\footnote{The figures in this subsection illustrate the construction for the case of the regular octahedron $\setcond{(\xi_1,\xi_2,\xi_3) \in \real^3}{|\xi_1|+|\xi_2|+|\xi_3| \le 1}.$ Since octahedra are not simple polytopes, they are not covered by the main result of \AveHenkSimPol on representation of simple polytopes by $d$ polynomial inequalities.}. With each vertex $v$ of $P$ we associate   a polynomial $b_v(x)$ such that $(b_v)_{\ge 0} = B(v) \cup \bigl(2 \, v-B(v)\bigr),$ where $B(v)$ is a pointed convex cone with apex at $v$ such that every edge of $P$ incident with $v$ lies in the boundary of $B(v)$; see Fig.~\ref{cone:at:vx:fig}. We define $p_1(x)$ as a ``weight\-ed combination'' of the polynomials $b_{v}(x)$ with $v$ ranging over the set of all vertices of $P$. More precisely, we set 
$$
	p_1(x) := \sum_{v} f_v(x)^{2k} \, b_{v}(x)
$$
where $v$ ranges over all vertices of $P$, $k \in \natur$ is suffiently large, and, for every vertex $v$ of $P$, $f_v(x)$ is a polynomial given by 
$$
	f_v(x) := \prod_{\substack{i=1,\ldots,m \\ q_i(v) \ne 0}} q_i(x),
$$
see also Fig.~\ref{surf:fig}.

\begin{FigTab}{cc}{0.7mm}
		\begin{picture}(80,60)
		\put(-10,-5){\IncludeGraph{width=90\unitlength}{Octahedron.eps}} 
		\put(20,45){$P$}
		\end{picture} 
		&
		\begin{picture}(80,60)
		\put(-10,-5){\IncludeGraph{width=90\unitlength}{OctahedronApprox.eps}} 
		\put(0,45){$p_0(x)=0$}
		\end{picture} 
		\\
	\multicolumn{2}{c}{\parbox[t]{0.85\textwidth}{\mycaption{Regular octahedron and an approximation of the octahedron by a strictly concave polynomial surface\label{octahedr:fig}}}}
\end{FigTab}

By construction $P \subseteq (p_0, p_1, p_2)_{\ge 0}.$ We have $p_0(x)<0$ for all $x$ sufficiently far away from $P.$ Further on, the region of all $x$ satisfying $p_2(x)<0$ is contiguous with all facets of $P.$ Finally, if $G$ is a vertex or an edge of $P$, $x_0$ is a point in the relative interior of $G$, and $v$ and $w$ are two vertices of $P$ with $v \in G$ and $w \notin G,$ respectively, then, for $x \rightarrow x_0$,  the term $f_v(x)^{2k} b_v(x)$ dominates over $f_w(x)^{2k} b_w(x)$ provided $k$ is sufficiently large. The latter observation is used for analysis of the properties of $(p_1)_{ \ge 0}.$

	\begin{FigTab}{cc}{0.7mm}
		\begin{picture}(80,60)
			\put(-10,-5){\IncludeGraph{width=90\unitlength}{OctahedronAndCone.eps}} 
			\put(65,30){$v$}
		\end{picture} 
		&
		\begin{picture}(80,60)
			\put(-10,-5){\IncludeGraph{width=90\unitlength}{OctahedronConePlanes.eps}}
			\put(65,30){$v$}
		\end{picture} 
		\\
	\parbox[t]{0.45\textwidth}{\mycaption{Polyhedron $P$ and the surface given by the equation $b_{v}(x)=0$\label{cone:at:vx:fig}}} 
		&
	\parbox[t]{0.45\textwidth}{\mycaption{The surface given by the equation $f_v(x)^{2k} b_{v}(x) =0$\label{surf:fig}}}
	\end{FigTab}

The geometry behind the construction is illustrated by the diagram from Fig.~\ref{diagr:octahedron}, where $P_J:= \setcond{x \in \real^3}{p_j(x) \ge 0 \ \forall j \in J}$ for $J \subseteq \{1,2,3\}$ with $J \ne \emptyset$ and an arrow between two sets indicates the inclusion relation.
	
\subsection{The case of unbounded polyhedra}

The case of unbounded $d$-polyhedra can be reduced to the case of bounded ones (at least for $d \le 3$). In Subsection~\ref{sect:proof:unbounded}	 we describe two possible ways of such a reduction. We sketch one of these ways below.

Assume that $P$ is unbounded. We replace $P$ by an isometric copy of $P$ in $\real^{d+1}$ with $o \not \in \aff P.$ It suffices to restrict considerations to line-free polyhedra $P$. The union of the conical hull of $P$ with the recession cone of $P$ is denoted by $\homog(P).$ The set $\homog(P)$ is a pointed polyhedral cone. We consider a hyperplane $H'$ in $\real^{d+1}$ such that $P' := H' \cap \homog(P)$ is bounded. Using the representations which were obtained above for the case of bounded polyhedra we find $d$ polynomials $p_0(x),\ldots,p_{d-1}(x)$ such that $$P' := \setcond{x \in \aff P'}{p_0(x) \ge 0,\ldots,p_{d-1}(x) \ge 0}.$$ It turns out that the choice of $p_0(x),\ldots,p_{d-1}(x)$ can be ``adjusted'' so that $p_0(x),\ldots,p_{d-1}(x)$ become homogeneous polynomials of even degree, while the set $(p_0)_{\ge 0}$ becomes the union of two pointed convex cones which are symmetric to each other with respect to the origin and which intersect precisely at $o.$ It follows that 
$$P = \setcond{x \in \aff P}{p_0(x) \ge 0,\ldots, p_{d-1}(x) \ge 0}.$$ 

\begin{FigTab}{c}{0.65mm}
\begin{picture}(132,155)
\put(33,53){\IncludeGraph{width=55\unitlength}{Octahedron.eps}} 
\put(-17,28){\IncludeGraph{width=55\unitlength}{OctahedronP0.eps}}
\put(88,28){\IncludeGraph{width=55\unitlength}{OctahedronP1.eps}}
\put(33,118){\IncludeGraph{width=55\unitlength}{OctahedronP2.eps}}
\put(-17,88){\IncludeGraph{width=55\unitlength}{OctahedronP02.eps}}
\put(88,88){\IncludeGraph{width=55\unitlength}{OctahedronP12.eps}}
\put(33,-2){\IncludeGraph{width=55\unitlength}{OctahedronP01.eps}}
\put(5,10){\IncludeGraph{width=110\unitlength}{PDiagram.eps}}
\put(50,60){\scriptsize $P$}
\put(0,35){\scriptsize $P_0$}
\put(105,35){\scriptsize $P_1$}
\put(45,120){\scriptsize $P_2$}
\put(-3,95){\scriptsize $P_{0,2}$}
\put(105,95){\scriptsize $P_{1,2}$}
\put(50,5){\scriptsize $P_{0,1}$}
\end{picture} \\
	\parbox[t]{0.95\textwidth}{\mycaption{Diagram illustrating representation of a three-dimensional polytope by three polynomial inequalities.\label{diagr:octahedron}}} 
\end{FigTab}

\section{Preliminaries from real algebraic geometry} \label{prelim:real:alg:geom} 

For information on real algebraic geometry  and, in particular, the geometry of semi-algebraic sets we refer to \cite{MR1393194}, \RAGbook. If $x$ is a variable in $\real^k$ ($k \in \natur$), then $\real[x]$ stands for the class of $k$-variate polynomials over $\real.$ Since $\real$ is a field of characteristic zero, we do not distinguish between real polynomials and real-valued polynomial functions.  In particular, we can view affine functions as polynomials of degree at most one. A subset $A$ of $\real^d$ is said to be \notion[semi-algebraic set]{semi-algebraic} if 
$$
	A=\bigcup_{i=1}^n \setcond{x \in \real^d}{f_{i,1}(x) > 0, \ldots, f_{i,s_i}(x)>0, \  g_i(x)=0},
$$
for some $n, s_1,\ldots, s_n \in \natur$ and $f_{i,j}(x), \, g_i(x) \in \real[x]$ with $i \in \{1,\ldots,n\}$ and $j \in \{1,\ldots,s_i\}.$

A real valued function $f(x)$ defined on a semi-algebraic set $A \subseteq \real^d$ is said to be a \notion{semi-algebraic function} if the graph of $f(x)$ is a semi-algebraic subset of $\real^{d+1}.$   The following theorem presents \notion{{\L}ojasiewicz's Inequality}; see \cite{MR0107168}, \RAGbook[Corollary~2.6.7].

\begin{theorem}  \ThmTitle{{\L}ojasiewicz 1959} \label{Loj}
	Let $A$ be a bounded and closed semi-algebraic set in $\real^d.$ Let $f(x)$ and $g(x)$ be continuous, semi-algebraic functions on $A$ satisfying $$\setcond{x \in A}{f(x) =0} \subseteq \setcond{x \in A}{g(x) = 0}.$$
	Then there exist $n \in \natur$ and $\lambda > 0$ such that 
	\begin{equation*}
		|g(x)|^n \le \lambda \, |f(x)| 
	\end{equation*}
	for every $x \in A.$  \eop
\end{theorem}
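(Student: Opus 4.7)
The plan is to reduce the inequality to a statement about a one-variable semi-algebraic function and then invoke the Puiseux expansion of such functions. Concretely, I would introduce the envelope
$$h(t) := \sup \bigsetcond{|g(x)|}{x \in A, \ |f(x)| \le t}$$
for $t \ge 0$ (with the convention $\sup \emptyset = 0$). Because $A$ is compact and $g$ is continuous, $h$ is well-defined, finite, and non-decreasing. The hypothesis $\setcond{x \in A}{f(x) = 0} \subseteq \setcond{x \in A}{g(x) = 0}$ forces $h(0) = 0$, and a standard compactness argument shows $h$ is continuous at $0$: if $|g(x_k)| \ge c > 0$ while $|f(x_k)| \to 0$, then any limit point $x^\ast$ of the sequence $(x_k)$ satisfies $f(x^\ast) = 0$ but $|g(x^\ast)| \ge c$, contradicting the inclusion.

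Next, I would verify that $h$ is semi-algebraic. The set $\setcond{(t,s) \in \real^2}{t \ge 0, \ s = h(t)}$ can be defined by a first-order formula in the theory of real closed fields that uses only the semi-algebraic data $A$, $f$, $g$ and quantifies over $x \in A$; by the Tarski--Seidenberg theorem this projection is semi-algebraic. Hence $h$ is a semi-algebraic function of one real variable with $h(0) = 0$ and continuous at $0$.

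The third step is to apply the Puiseux expansion of semi-algebraic functions on $\real$: in some neighborhood of $0$ there exist $n \in \natur$ and $\mu > 0$ such that $h(t) \le \mu \, t^{1/n}$. Since $|g(x)| \le h(|f(x)|)$ for every $x \in A$, this yields $|g(x)|^n \le \mu^n \, |f(x)|$ whenever $|f(x)|$ lies below some threshold $\delta > 0$. On the complementary region $\setcond{x \in A}{|f(x)| \ge \delta}$, compactness of $A$ and continuity of $g$ give an absolute bound $|g(x)| \le M$, so $|g(x)|^n \le (M^n/\delta) \, |f(x)|$. Taking $\lambda := \max(\mu^n, M^n/\delta)$ finishes the proof.

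The main obstacle is to justify the two non-elementary ingredients from real algebraic geometry: the semi-algebraicity of $h$, which rests on quantifier elimination, and the asymptotic bound $h(t) \le \mu \, t^{1/n}$, which rests on the Puiseux-series structure of semi-algebraic functions on $\real$. An alternative for the last step is to apply a cylindrical algebraic decomposition to the graph of $h$ near $t = 0$: on each cell $h$ agrees with an algebraic function, and the vanishing rate can be read off from the defining polynomial via standard estimates on its Newton polygon. Once these two facts are available, the remainder of the argument is a routine combination of compactness and a case split on the size of $|f(x)|$.
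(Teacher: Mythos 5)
The paper does not prove this statement at all: it quotes {\L}ojasiewicz's Inequality as a known result with references to {\L}ojasiewicz's original paper and to Bochnak--Coste--Roy, Corollary~2.6.7. Your argument is correct and is essentially the standard proof given in that cited reference --- reduction to the one-variable envelope $h(t)=\sup\{|g(x)| : x\in A,\ |f(x)|\le t\}$, semi-algebraicity of $h$ via Tarski--Seidenberg, the Puiseux-type bound $h(t)\le\mu\,t^{1/n}$ near $0^{+}$, and a compactness case split on the size of $|f(x)|$.
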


\newcommand{\cP}{\mathcal{P}}
An expression $\Phi$ is called a \notion{first-order formula  over $\real$} if $\Phi$ is a formula built with a finite number of conjunctions, disjunctions, negations, and universal or existential quantifiers on variables, starting from formulas of the form $f(x)=0$ or $f(x)>0$ with $f \in \real[x]$; see \RAGbook[Definition~2.2.3].  The \notion{free variables} of $\Phi$ are those variables, which are not quantified. A formula with no free variables is called a \emph{sentence}. Each sentence is either true or false. 
The following result is relevant for the constructive part of our main theorem; see \BasuPollackRoy[Algorithm~12.30]. 

\begin{theorem} \label{TarskiSeidenberg} \ThmTitle{Tarski 1951, Seidenberg 1954}
Let $\Phi$ be a sentence over $\real$. Then there exists an algorithm that takes $\Phi$ and decides whether $\Phi$ is true or false. \eop
\end{theorem}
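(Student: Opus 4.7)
The plan is to prove the theorem via algorithmic quantifier elimination. It suffices to produce an algorithm which, given any first-order formula $\Phi(x_1,\ldots,x_k)$ over $\real$, outputs a quantifier-free formula $\Psi(x_1,\ldots,x_k)$ equivalent to $\Phi$ over $\real$. Applied to a sentence (the case $k=0$), the resulting $\Psi$ is a Boolean combination of atomic formulas of the form $c=0$ or $c>0$ with $c \in \real$ an explicit rational constant, whose truth value can be computed directly by arithmetic comparisons. So the entire problem reduces to an effective quantifier elimination procedure.

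First I would reduce to the elimination of a single existential quantifier. Any formula can be put into a prenex form $Q_1 y_1 \cdots Q_r y_r \, \phi(x,y)$ with $\phi$ quantifier-free, and universal quantifiers can be replaced by existentials via $\forall y \, \phi \equiv \neg \exists y \, \neg \phi$, pushing negations down to atoms using $\neg(f>0) \equiv (f=0) \vee (-f>0)$ and $\neg(f=0) \equiv (f>0) \vee (-f>0)$. Consequently it is enough to describe an algorithm that, given a quantifier-free $\phi(x,y)$ involving polynomials $P_1(x,y),\ldots,P_s(x,y) \in \real[x,y]$, produces a quantifier-free $\psi(x)$ equivalent to $\exists y \, \phi(x,y)$.

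For the one-variable elimination I would use effective real root counting. For a fixed $x \in \real^k$, the formula $\phi(x,y)$ is a Boolean combination of sign conditions on $P_1(x,y),\ldots,P_s(x,y)$ viewed as univariate polynomials in $y$. The set of sign conditions on $(P_1(x,\cdot),\ldots,P_s(x,\cdot))$ that are realizable by some $y \in \real$ is determined by sign variations of a suitable Sturm-like sequence built from derivatives, pseudoremainders and subresultants of the $P_i$ in the variable $y$; for each candidate sign pattern one can thus decide realizability by a Boolean combination of sign conditions on the leading coefficients and subresultant coefficients, which are themselves polynomials in $x$. Taking the disjunction over all patterns that make $\phi$ true yields the desired quantifier-free $\psi(x)$. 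Iterating this procedure over the quantifiers of $\Phi$ and finally evaluating the constant sign conditions of the resulting sentence produces the algorithm.

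The main obstacle is the bookkeeping around degeneracies of leading coefficients: when the top coefficient of some $P_i(x,y)$ in $y$ vanishes for certain $x$, the degree of the polynomial drops and the subresultant chain must be recomputed, so the algorithm branches into a finite but combinatorially rich case analysis driven by the signs of successive subresultant coefficients in $x$. Correctness of each branch is ensured by the classical theorems on real root counting via sign variations of subresultants, and termination is automatic because the whole construction is effective in the syntactic size of $\Phi$. The worst-case complexity is doubly exponential, but that is immaterial for a mere decidability statement.
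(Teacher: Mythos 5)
The paper does not prove this statement at all: it is quoted as a classical result (Tarski--Seidenberg) with a pointer to Basu, Pollack, and Roy, Algorithm~12.30, and is used as a black box to justify the equality tests in the algorithmic parts of Theorem~\ref{main:thm:polyt} and Proposition~\ref{bd:to:unbd}. So there is no in-paper argument to compare yours against; what you have written is a sketch of the standard proof that the cited algorithm implements.

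As a sketch it is correct in outline and follows the canonical route: prenex normal form, reduction of $\forall$ to $\neg\exists\neg$ with negations pushed to atoms, elimination of a single existential quantifier by effective real root counting, and final evaluation of the resulting variable-free sentence (which is legitimate under the paper's computational model, where real constants are stored exactly and compared in one step --- your remark that the constants are ``rational'' is unnecessary and not quite right, but harmless). The one caveat worth flagging is that the entire mathematical content of the theorem is concentrated in the sentence you state almost in passing: that the realizable sign conditions of a family $P_1(x,\cdot),\ldots,P_s(x,\cdot)$ of univariate polynomials are characterized by a \emph{fixed} Boolean combination of sign conditions on finitely many polynomials in $x$ (leading coefficients and subresultant coefficients), uniformly over the branches where leading coefficients degenerate. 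That is the Sturm--Tarski sign-determination machinery, and your proposal invokes it rather than proves it. For the purpose of this paper that is entirely appropriate --- the authors themselves invoke the whole theorem --- but one should be aware that the sketch defers to the same classical root-counting theorems that the citation does, rather than replacing them.
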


 Dealing with algorithmic statements like Theorem~\ref{TarskiSeidenberg} (or Theorem~\ref{main:thm}), it assumed that a polynomial is given by its coefficients, a finite list of real coefficients occupies finite memory space, and arithmetic and comparison operations over real numbers are computable in one step. 

\section{Preliminaries from convexity} \label{sect:not:conv}

\newcommand{\dist}{\mathop{\mathrm{dist}}\nolimits}

For information on convex bodies and polytopes we refer to \SchnBk, \cite{MR2335496}, \Ziegler. The cardinality of a set is denoted by $| \dotvar |.$ The origin, Euclidean norm, and scalar product in $\real^d$ are denoted by $o$,  $\|\dotvar\|,$ and $\sprod{\dotvar}{\dotvar},$ respectively. We endow $\real^d$ with its Euclidean topology. By $\uball^d(c,\rho)$ we denote the closed Euclidean ball in $\real^d$ with center at $c \in \real^d$ and radius $\rho >0.$ The notations $\aff$, $\conv$, $\intr$, and $\relint$ stand for the affine hull, convex hull, interior, and relative interior, respectively.  If $X$ and $Y$ are non-empty sets in $\real^d$ we set $X \pm Y:= \setcond{x \pm y}{x \in X, \ y \in Y}$. The \emph{Hausdorff distance} $\dist(X,Y)$ between non-empty, compact sets $X, \, Y \subseteq \real^d$ is defined by 
$$
	\dist(X,Y):= \max \Bigl\{ \max_{x \in X} \min_{y \in Y} \|x-y\| \, , \, \max_{y \in Y} \min_{x \in X} \|x-y\| \Bigr\},
$$
see also \SchnBk[p.~48]. It is known that $\dist(X,Y)$ is a metric on the space of non-empty, compacts sets in $\real^d.$  The Hausdorff distance can be expressed by
\begin{equation} 
	\dist(X,Y) := \min \setcond{ \rho \ge 0}{X \subseteq Y+\uball^d(o,\rho), \ Y \subseteq X+\uball^d(o,\rho)}.
\end{equation} 

\newcommand{\uu}[2]{u_{#1}(#2)}
\newcommand{\fc}[2]{\mathcal{F}_{#1}(#2)}
\newcommand{\fcst}[3]{\mathcal{F}_{#1}(#2,#3)} 

If for a convex set $X \subseteq \real^d$ and a point $x_0 \in \real^d$ one has $x_0 + \alpha \, x \in X$ for every $\alpha \ge 0$ and $x \in X,$ then $X$ is said to be a \notion{convex cone} with \notion{apex} at $x_0.$ Given a non-empty subset $X$ of $\real^d$, we introduce the \notion{conical hull} $\cone (X)$ of $X$ as the set of all possible linear combinations $\lambda_1 \, x_1 + \cdots + \lambda_m \,  x_m$ with $m \in \natur,$ $\lambda_i \ge 0$, and $x_i \in X$ for $i \in \{1,\ldots,m\}.$ Clearly, $\cone (X)$ is a convex cone with apex at the origin. 

The notions of polyhedron and polytope were defined in the introduction. Polyhedra (resp. polytopes) of dimension $n$ are referred to as \notion[polyhedron]{$n$-polyhedra} (resp. \notion[polytope]{$n$-polytopes}). A subset of $\real^d$ which is both a cone and polyhedron is said to be a \notion{polyhedral cone.} A polyhedron is said to be \notion{line-free} if it does not contain a straight line. The set $\rec(P):= \setcond{u \in \real^d}{P +u \subseteq P}$ is said to be the \notion{recession cone} of $P$.  A line-free polyhedral cone is said to be \notion{pointed}. Every polyhedron $P$ can be represented by 
\begin{equation} \label{polyt:decomp}
	P = Q+ L + C,
\end{equation} 
where $Q$ is a polytope, $L$ is a linear subspace of $\real^d$, and $C$ is a pointed polyhedral cone; see \Ziegler[Section~1.5]. For \eqref{polyt:decomp} one necessarily has $L+C=\rec(P).$ If $o \notin \aff P,$ we introduce 
$$
	\homog(P) := \cone(P) \cup \rec(P).
$$
It is known that $\homog (P)$ is a polyhedral cone; see \Ziegler[pp.~44-45]. Furthermore, it can be verified that if $P$ is line-free, then $\homog(P)$ is pointed. 

For a convex polytope $P$ in $\real^d$ we introduce the \notion{support function} and \notion{exposed facet} in direction $u$ by the equalities 
$$h(P,u):= \max \setcond{\sprod{x}{u}}{x \in P}$$ 
and 
$$F(P,u):= \setcond{x \in P}{\sprod{x}{u}=h(P,u)},$$ 
respectively.  Given a polytope $P$ in $\E^d$, $\calF(P)$ denotes the class of all faces of $P$, and, for $i \in \{-1,\ldots,d\}$, $\calF_i(P)$ stands for the class of all \notion{$i$-faces} of $P$ (i.e., faces of dimension $i$). We recall that for every point $x$ of $P$ there exists a unique non-empty face $G$ of $P$ with $x \in \relint P.$ Consequently, $P$ is the disjoint union of the relative interiors of all non-empty faces of $P.$ A face $G$ of an $n$-polytope $P$ in $\real^d$ is said to be a \notion{facet} of $P$ if $G$ has dimension $n-1.$ If $G$ is a face of $P$, we define $$\calF_{i}(G,P):=\setcond{F \in \calF_i(P)}{G \subseteq F}.$$  By $\vx(P)$ we denote the set of all \notion{vertices} of $P$ (i.e., the set of $0$-dimensional faces). If $F$ is a facet of $P$ by $\uu{F}{P}$ we denote the unit normal of $P$ which is parallel to $\aff (P)$ and satisfies $F(P,\uu{F}{P}) = F.$ More generally, if $G$ is a proper face of $P$, we define
$$\uu{G}{P} := \biggl( \sum_{F \in \fcst{n-1}{G}{P}} \uu{F}{P} \biggr) \biggl/ \biggl\| \sum_{F \in \fcst{n-1}{G}{P}} \uu{F}{P} \biggr\| \biggr. ,$$ 
where $n:=\dim (P).$ It is not hard to see that $F(P,\uu{G}{P})=G.$  

With every facet $F$ of $P$ we associate the affine function
\begin{equation*}
	q_F(P,x)  := \frac{h(P,u_F(P))-\sprod{u_F(P)}{x}}{\diam(P)}, 
\end{equation*}
where  
\begin{equation*}
	\diam(P):= \max \setcond{ \|x-y\|}{x, \, y \in P} = \max \setcond{\|x-y\|}{x, \, y \in \vx(P)}.
\end{equation*}
By construction, $0 \le q_F(P,x) < 1$ for every $x \in P$ with equality $q_F(P,x)=0$ if and only if $x \in F.$

For $v \in \vx(P),$ the \notion{supporting cone} $S(P,v)$  of $P$ at $v$ is given by 
\begin{equation*} 
	S(P,v):= \cone (P-v),
\end{equation*}
see also \SchnBk[p.~70].  If $\dim P=d,$ then 
\begin{equation*}
S(P,v) = \setcond{x \in \real^d}{\bigsprod{u_F(P)}{x} \le 0 \ \forall \, F \in \calF_{d-1}(v,P)}
\end{equation*}
and by this 
\begin{equation*} 
	v+ S(P,v)  = \setcond{x \in \real^d}{q_F(P,x) \le 0 \ \forall \, F \in \calF_{d-1}(v,P)}.
\end{equation*}
For every $v \in \vx(P)$ we define the hyperplane 
$$H_v(P) := \setcond{x \in \real^d}{\bigsprod{x}{u_v(P)} =-1}$$ and the polytope $$P_v:= S(P,v) \cap H_v(P).$$ 

It can be shown that $\cone(P_v)=S(P,v).$ Hence the cone
\begin{align*}
	S_\rho(P,v) := \cone \Bigl(\bigl(P_v + \uball^d(o,\rho) \bigr) \cap H_v(P) \Bigr).
\end{align*}
with $\rho>0$ and $v \in \vx(P)$ can be viewed as an ``outer approximation'' of the supporting cone with the parameter $\rho$ controlling the quality of the approximation.
For every $v \in \vx(P)$ one has
\begin{equation} \label{08.04.29,09:54}
	P \setminus \{v\} \subseteq \intr \bigl( v+S_\rho(P,v) \bigr).
\end{equation}

  For $x \in \real^d$ and a polytope $P$ in $\real^d$ and $x \in \aff P$ we introduce the notation
	$$
		\calF^-(P,x):= \setcond{F \in \calF_{d-1}(P)}{q_F(P,x) \le 0}.
	$$
The class $\calF^-(P,x)$ be interpreted as the set of all facets of the polytope $P$ which are visible from $x$.

 In Theorem~\ref{main:thm} and the remaining statements dealing with algorithms a polyhedron is assumed to be given by a system of affine inequalities (the so-called \notion{H-representation}) and a polynomial by a list of its coefficients. It is not difficult to show that there exists an algorithm that takes a polytope $P$ and constructs all functions $q_F(P,x)$ where $F$ is a facet of $P.$

\section{A family of neighborhoods of a $3$-polytope} \label{sect:decomp:neighb}

Let $\eps>0,$ $\rho>0$, and $P$ be a $3$-polytope in $\real^3$. 
We set $U_P(P,\eps,\rho):=P$ and $U_F(P,\eps,\rho):= \bigsetcond{x \in \real^3}{\calF^-(P,x)=\{F\}}$ for $F \in \calF_2(P).$ Furthermore, for $I \in \calF_1(P)$ and  $v \in \vx(P)$ we define 
\begin{eqnarray} 
	U_I(P,\eps,\rho) &:=& \bigl(I+\uball^3(o,\eps) \bigr) \cap \setcond{x \in \real^3}{\calF_2(I,P) \subseteq \calF^-(P,x)} \nonumber \\ 
		& & \cap  \biggl( \bigcap_{v \in \vx(P)} S_\rho(P,v) \biggr), \label{UI:def} \\
	U_v(P,\eps,\rho) &:= & \uball^3(v,\eps) \setminus \bigl(v + \intr S_\rho(P,v)),  
\end{eqnarray}
see also Figs.~\ref{EdgeRegionsAndOctahedron:fig} and \ref{VertexRegionsAndOctahedron:fig}.
We also introduce the region 
$$
	U_v'(P,\eps,\rho) := \uball^3(v,\eps) \setminus \Bigl( (v + \intr S_\rho(P,v)) \cup (v- \intr S_\rho(P,v)) \Bigr), \label{Uv:def} 
$$
which is a subset of $U_v(P,\eps,\rho)$; see Fig.~\ref{ModVertexRegionsAndOctahedron:fig}.  With $P$ we associate the set $U(P,\eps,\rho)$ 
$$
	U(P,\eps,\rho) := \bigcup_{G \in \calF(P) \setminus \{ \emptyset \}} U_G(P,\eps,\rho),
$$

\begin{FigTab}{ccc}{0.6mm}
		\begin{picture}(70,60)
		\put(-5,0){\IncludeGraph{width=77\unitlength}{EdgeRegionsAndOctahedron.eps}} 
		\end{picture}  
		&
		\begin{picture}(70,60)
		\put(-5,0){\IncludeGraph{width=77\unitlength}{VertexRegionsAndOctahedron.eps}} 
		\end{picture}  
		&
		\begin{picture}(70,60)
		\put(-5,0){\IncludeGraph{width=77\unitlength}{ModVertexRegionsAndOctahedron.eps}} 
		\end{picture}  
		\\
	\parbox[t]{0.3\textwidth}{\mycaption{The regions $U_I(P,\eps,\rho)$ with $I \in \calF_1(P)$\label{EdgeRegionsAndOctahedron:fig}}} & \parbox[t]{0.3\textwidth}{\mycaption{The regions $U_v(P,\eps,\rho)$ with $v \in \vx(P)$ \label{VertexRegionsAndOctahedron:fig}}} &\parbox[t]{0.3\textwidth}{\mycaption{The regions $U'_v(P,\eps,\rho)$ with $v \in \vx(P)$ \label{ModVertexRegionsAndOctahedron:fig}}}
\end{FigTab}

The main statement of this section is Propsition~\ref{U:prop}. The statement of this proposition is intuitively clear. However, we are not aware of any short proof of it.
\begin{proposition} \label{U:prop}
	Let $P$ be a $3$-polytope in $\real^3.$ Then $ P \subseteq \intr U(P,\eps,\rho)$
	for all sufficiently small $\rho>0$ and all $\eps> 0.$  \eop
\end{proposition}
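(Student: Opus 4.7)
The plan is to verify $P \subseteq \intr U(P,\eps,\rho)$ pointwise: for each $x \in P$ I would exhibit an open ball around $x$ contained in $U(P,\eps,\rho)$, provided $\rho$ is sufficiently small (independently of $x$). Since every $x \in P$ lies in the relative interior of a unique non-empty face $G \in \calF(P)$, the argument splits into four cases according to $\dim G \in \{0,1,2,3\}$, in increasing order of difficulty. A compactness argument over the finite family $\vx(P)$ combines the local choices of $\rho$ into a single threshold.

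The cases $\dim G \in \{2,3\}$ are immediate. For $\dim G = 3$ the whole interior of $P$ coincides with $\intr U_P \subseteq \intr U$. For $x \in \relint F$ with $F$ a facet, a small ball $B$ about $x$ avoids every other facet and every vertex; then $B \cap P \subseteq U_P$, and for $y \in B \setminus P$ only $F$ is visible, so $y \in U_F$. Hence $B \subseteq U_P \cup U_F$.

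For $x$ in the relative interior of an edge $I$ with incident facets $F_1,F_2$, I would pick a ball $B$ of radius at most $\eps$ about $x$ avoiding all vertices and all facets other than $F_1,F_2$. Points of $B \cap P$ lie in $U_P$, and for $y \in B \setminus P$ either exactly one of $F_1,F_2$ is visible, giving $y \in U_{F_i}$, or both are visible. In the latter case the three conditions defining $U_I$ in \eqref{UI:def} have to be checked: proximity to $I$ is built into the radius of $B$; the inclusion $\calF_2(I,P) \subseteq \calF^-(P,y)$ is immediate; and the fat-cone condition follows from \eqref{08.04.29,09:54} together with continuity, since the relative interior of $I$ lies in the interior of the relevant supporting cone at each vertex.

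The main obstacle is the vertex case $x = v \in \vx(P)$, where many faces meet at $v$. I would fix $\eps' \le \eps$ and analyze $y \in \uball^3(v,\eps')$. If $y \in P$, then $y \in U_P$; if $y \notin v + \intr S_\rho(P,v)$, then $y \in U_v$ directly from the definition. In the remaining case $y \notin P$ but $y \in v + \intr S_\rho(P,v)$, the point $y$ must be visible from some facet of $P$, and for $\eps'$ sufficiently small only facets incident to $v$ can be visible. A finer subcase analysis on $\calF^-(P,y)$---one incident facet, two incident facets sharing an edge through $v$, or a configuration around a neighboring vertex---places $y$ in the appropriate $U_F$, $U_I$, or $U_{v'}$ region. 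The crucial geometric input is that as $\rho \to 0$ the fat cone $v+S_\rho(P,v)$ collapses onto the supporting cone $v+S(P,v)$, so the $U_G$-decomposition near $v$ matches, up to small perturbation, the natural stratification of a neighborhood of $v$ by the incident faces of $P$; this is where the smallness of $\rho$ is used essentially, and uniformity over all vertices follows from finiteness of $\vx(P)$.
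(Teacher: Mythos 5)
Your overall strategy coincides with the paper's: a pointwise verification, split according to the dimension of the unique face $G$ with $x \in \relint G$, and your treatment of the cases $\dim G \in \{1,2,3\}$ matches the paper's (semicontinuity of $\calF^-$ as in Lemma~\ref{sem-cont-f-minus}, plus \eqref{08.04.29,09:54} for membership in the fattened cones of the other vertices). The vertex case, however, is exactly where the paper concentrates its effort, and your treatment of it has a genuine gap. For $y \notin P$ with $y \in v+\intr S_\rho(P,v)$ and $y$ close to $v$, you assert that a subcase analysis on $\calF^-(P,y)$ places $y$ in an appropriate $U_F$, $U_I$, or $U_{v'}$. Consider the subcase in which $\calF^-(P,y)$ consists of two facets that are both incident to $v$ but share only the vertex $v$ and no edge; this configuration is real, e.g.\ at every vertex of the octahedron four facets meet and opposite pairs intersect in a single point. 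Such a $y$ lies in no $U_F$ (two facets are visible), in no $U_I$ (no edge has both of its incident facets visible, so the condition $\calF_2(I,P)\subseteq\calF^-(P,y)$ fails for every edge), not in $U_v$ (it sits inside the open fattened cone), and not in any $U_{v'}$ with $v'\ne v$, since $\eps$ may be taken arbitrarily small so that $\uball^3(v',\eps)$ stays away from $v$. The same difficulty arises when three or more facets through $v$ are visible.

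So the entire vertex case hinges on showing that for $\rho$ sufficiently small these subcases simply cannot occur inside $\bigl(v+S_\rho(P,v)\bigr)\setminus\{v\}$: every such point sees, among the facets through $v$, only facets belonging to $\calF_2(I,P)$ for a single edge $I$ through $v$. This is precisely Lemma~\ref{rho:lem} of the paper, proved by radially projecting onto the hyperplane $H_v(P)$ and applying the ``visible facets share a vertex'' statement (Lemma~\ref{vis:vx:lem}) to the vertex figure $P_v$ one dimension down. Your remark that ``as $\rho\to 0$ the fat cone collapses onto the supporting cone'' gestures at this but does not prove it; the needed fact is a combinatorial statement about visibility from points near (and even on) the supporting cone, not merely a convergence statement about the cones themselves. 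The stray mention of $U_{v'}$ for neighboring vertices also suggests the intended case division is not pinned down. Supplying Lemma~\ref{rho:lem} (or an equivalent) is the missing idea; without it the proof does not close.
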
 

The proof of Proposition~\ref{U:prop} is based on a number of auxiliary statements. The proofs of the simple Lemmas~\ref{sem-cont-f-minus}, \ref{vis:vx:lem}, and \ref{08.04.30,11:28} are omitted.
	
\begin{lemma} \label{sem-cont-f-minus} Let $P$ be a $d$-polytope in $\real^d,$ $G$ a non-empty face of $P$, and $x \in \relint G.$ Then there exists $\eps>0$ such that every $y \in \uball^d(x,\eps)$ satisfies $\calF^-(P,y) \subseteq \calF_{d-1}(G,P)$  \eop
\end{lemma}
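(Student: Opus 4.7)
The plan is to argue by contraposition on the individual facets: I would show that for every facet $F \in \calF_{d-1}(P)$ with $G \not\subseteq F$, one has $q_F(P,y) > 0$ for all $y$ in a small enough ball around $x$; taking the minimum radius over the finitely many such facets then yields the desired $\eps$.

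First I would verify that $x \notin F$ whenever $F$ is a facet of $P$ not containing $G$. This rests on the standard face-lattice property reviewed in Section~\ref{sect:not:conv}: the polytope $P$ is the disjoint union of the relative interiors of its non-empty faces, so $G$ is the unique face with $x \in \relint G$, and every face of $P$ containing the point $x$ must contain this unique minimal face $G$. Hence a facet $F$ that omits $G$ cannot contain $x$.

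Next, because $x \in P$ we have $q_F(P,x) \ge 0$ by definition of $q_F(P,\dotvar)$, with equality if and only if $x \in F$; combined with the previous step this gives the strict inequality $q_F(P,x) > 0$. Since $q_F(P,\dotvar)$ is affine and hence continuous, there exists $\eps_F > 0$ such that $q_F(P,y) > 0$ for every $y \in \uball^d(x,\eps_F)$.

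Finally, since $\calF_{d-1}(P)$ is finite, I would define
$$\eps := \min \bigsetcond{\eps_F}{F \in \calF_{d-1}(P), \ G \not\subseteq F},$$
adopting the convention that the minimum of the empty set is any positive number (this degenerate case occurs only when $G$ is contained in every facet, e.g.\ when $G = P$ and $d > 0$, in which case the conclusion is trivial since $\calF_{d-1}(G,P) = \calF_{d-1}(P)$). For any $y \in \uball^d(x,\eps)$ and any facet $F \in \calF^-(P,y)$ we then have $q_F(P,y) \le 0$, which by our choice of $\eps$ forces $G \subseteq F$, i.e.\ $F \in \calF_{d-1}(G,P)$. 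There is no real obstacle here; the only point that needs care is the face-lattice step identifying $G$ as the minimal face containing $x$, and the bookkeeping for the trivial case where no facet of $P$ omits $G$.
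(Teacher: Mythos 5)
Your proof is correct. The paper gives no proof of this lemma to compare against --- it is explicitly listed among the ``simple'' lemmas whose proofs are omitted --- and your argument is surely the intended one: a facet $F$ with $G \not\subseteq F$ cannot contain $x$, since every face of $P$ through $x$ contains the minimal face $G$ of $x$; hence $q_F(P,x)>0$, and continuity of the finitely many affine functions $q_F(P,\dotvar)$ yields the required $\eps$.

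One parenthetical in your write-up is stated backwards, though it is immaterial to the argument. The ``degenerate case'' in which the set $\setcond{F \in \calF_{d-1}(P)}{G \not\subseteq F}$ is empty would mean that $G$ lies in \emph{every} facet; this never occurs for a non-empty face of a $d$-polytope in $\real^d$, because the intersection of all facets of such a polytope is empty (the facet normals positively span $\real^d$, so a point on every facet hyperplane would contradict the existence of an interior point). The case $G=P$ is in fact the opposite extreme: there \emph{every} facet omits $G$, your main argument applies verbatim ($x \in \intr P$, so $q_F(P,x)>0$ for all $F$, and $\calF^-(P,y)=\emptyset$ for $y$ near $x$), and $\calF_{d-1}(G,P)=\emptyset$ rather than $\calF_{d-1}(P)$. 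None of this affects the validity of the proof.
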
 

If a point $x$ outside $P$ is sufficiently close to $P$, then all facets of $P$ visible from $x$ have a vertex in common. This is stated in a more formal way in the following lemma.

\begin{lemma} \label{vis:vx:lem} 
	Let $P$ be a $d$-polytope in $\real^d$. Then for some $\eps>0$ and every $x \in P+\uball^d(o,\eps)$ there exists  $v \in \vx (P)$ satisfying $\calF^-(P,x) \subseteq \calF_{d-1}(v,P).$ \eop
\end{lemma}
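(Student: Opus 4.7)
The plan is to reduce the global statement to the pointwise statement of Lemma~\ref{sem-cont-f-minus} by a standard compactness argument, using that $P$ itself is compact (being a polytope).

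First I would observe that the lemma is trivial pointwise on $P$. Fix $y \in P$ and let $G_y$ be the unique face of $P$ with $y \in \relint G_y$. Lemma~\ref{sem-cont-f-minus} provides a radius $\delta_y > 0$ such that $\calF^-(P,x) \subseteq \calF_{d-1}(G_y, P)$ for every $x \in \uball^d(y, \delta_y)$. If $G_y = P$ then $\calF_{d-1}(G_y, P) = \emptyset$, so $\calF^-(P,x) = \emptyset$ and any vertex of $P$ satisfies the required inclusion vacuously. Otherwise $G_y$ is a non-empty proper face of the bounded polytope $P$, hence is itself a polytope and has at least one vertex $v_y$. Since every facet of $P$ that contains $G_y$ a fortiori contains $v_y$, we obtain $\calF_{d-1}(G_y, P) \subseteq \calF_{d-1}(v_y, P)$, and so $\calF^-(P,x) \subseteq \calF_{d-1}(v_y, P)$ on $\uball^d(y, \delta_y)$.

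Next I would pass to a uniform neighborhood. The open balls $\uball^d(y, \delta_y/2)$, $y \in P$, form an open cover of the compact set $P$; extract a finite subcover indexed by $y_1, \ldots, y_N$ and set $\eps := \min_{i=1,\ldots,N} \delta_{y_i}/2 > 0$. Given any $x \in P + \uball^d(o,\eps)$, choose $y_0 \in P$ with $\|x-y_0\| < \eps$; then $y_0$ lies in some $\uball^d(y_i, \delta_{y_i}/2)$, and the triangle inequality gives $\|x-y_i\| < \eps + \delta_{y_i}/2 \le \delta_{y_i}$. Applying the pointwise conclusion at $y_i$ yields $\calF^-(P,x) \subseteq \calF_{d-1}(v_{y_i}, P)$, so $v := v_{y_i} \in \vx(P)$ witnesses the lemma.

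There is no real obstacle here: Lemma~\ref{sem-cont-f-minus} does all the geometric work, and the only new ingredient is the compactness of $P$ together with the elementary fact that every non-empty face of a bounded polytope has a vertex. The mildly delicate point is merely the book-keeping in the triangle-inequality step which ensures that a single $\eps$ is valid for every $x$ in the $\eps$-neighborhood of $P$, regardless of which face of $P$ is closest to $x$.
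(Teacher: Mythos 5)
The paper omits the proof of this lemma, stating only that it is simple, so there is no paper argument to compare against; your proof is correct and is essentially the natural way to fill that gap: Lemma~\ref{sem-cont-f-minus} gives the local statement at each $y\in P$, the elementary observation that any vertex $v$ of a non-empty face $G$ satisfies $\calF_{d-1}(G,P)\subseteq\calF_{d-1}(v,P)$ converts that to the face-to-vertex form required, and compactness of $P$ upgrades it to a uniform $\eps$. One cosmetic remark: $\uball^d(\cdot,\cdot)$ is defined in the paper as a \emph{closed} ball, so the finite subcover should formally be drawn from the open balls $\intr\uball^d(y,\delta_y/2)$; this has no effect on the argument.
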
 

\begin{lemma} \label{08.04.30,11:28} 
	Let $P$ be a $d$-polytope in $\real^d$ with $o \in \vx(P)$ and let  $F \in \calF_{d-1}(o,P)$. Then $G:= \aff F  \cap P_o$ is a facet of $P_o$. Furthermore, for some $\alpha>0$ one has
	\begin{equation*} 
		q_F(P,x) = \alpha \cdot q_G(P_o,x) \qquad \forall \, x \in H_o(P).
	\end{equation*}
	\eop
\end{lemma}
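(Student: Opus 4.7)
The plan is to first identify $G$ as a facet by a direct cone computation, and then to obtain the identity from the observation that $q_F(P,\cdot)|_{H_o(P)}$ and $q_G(P_o,\cdot)$ are two affine functions on the same hyperplane with the same zero locus.

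Since $o \in \vx(P)$, I would start by noting that $S(P,o) = \cone(P)$ and $P_o = \cone(P) \cap H_o(P)$. The condition $o \in F$ makes $\aff F$ a linear hyperplane with equation $\sprod{u_F(P)}{x}=0$, and forces $h(P, u_F(P)) = \sprod{u_F(P)}{o} = 0$. The identity $\aff F \cap \cone(P) = \cone(F)$ then follows immediately: $\supseteq$ is clear, and conversely any $x = \lambda y$ with $\lambda > 0$, $y \in P$ and $\sprod{u_F(P)}{x}=0$ forces $\sprod{u_F(P)}{y} = h(P, u_F(P))$, so $y \in F$. Therefore
\[ G = \aff F \cap \cone(P) \cap H_o(P) = \cone(F) \cap H_o(P). \]
Since $\cone(F)$ is a $(d-1)$-dimensional facet of the $d$-dimensional pointed cone $\cone(P)$ and $o \in \cone(F) \setminus H_o(P)$, the transversal cross-section $\cone(F) \cap H_o(P)$ has dimension $d-2$ and is a facet of $P_o$.

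For the formula, $h(P,u_F(P)) = 0$ yields $q_F(P,x) = -\sprod{u_F(P)}{x}/\diam(P)$, so the zero set of $q_F(P,\cdot)|_{H_o(P)}$ is $\aff F \cap H_o(P) = \aff G$. The function $q_G(P_o,\cdot)$ is affine on $\aff P_o = H_o(P)$ and also vanishes precisely on the hyperplane $\aff G$ of $H_o(P)$. Two affine functions on a hyperplane vanishing on the same affine hyperplane are proportional, so $q_F(P,x) = \alpha \, q_G(P_o,x)$ on $H_o(P)$ for some $\alpha \in \real$. Evaluating at any point in $\relint P_o$ gives $q_F(P,x) > 0$ and $q_G(P_o,x) > 0$, hence $\alpha > 0$.

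The main obstacle is not depth but a small piece of bookkeeping: the ``common hyperplane of zeros'' step requires $\aff F \cap H_o(P)$ to actually be a hyperplane of $H_o(P)$, i.e., $\aff F \ne H_o(P)$, equivalently $u_F(P) \ne \pm u_o(P)$. This uses the fact that at the vertex $o$ the normal cone of $P$ is full-dimensional and pointed: $u_F(P)$ is one of its extreme rays, whereas $u_o(P)$ is a normalized sum of all these extreme rays and therefore lies in the relative interior of this cone, so it can neither coincide with nor be antipodal to a single extreme ray.
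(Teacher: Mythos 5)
Your argument is correct. Note that the paper itself gives no proof of this lemma --- it is one of the three statements explicitly declared simple and ``omitted'' in Section~\ref{sect:decomp:neighb} --- so there is nothing to compare against; your write-up simply fills that gap. All the key points check out: $h(P,u_F(P))=0$ because $o\in F$, hence $\aff F=\{x:\sprod{u_F(P)}{x}=0\}$ and $\aff F\cap\cone(P)=\cone(F)$; the cross-section $\cone(F)\cap H_o(P)$ is the exposed face $P_o\cap\{\sprod{u_F(P)}{x}=0\}$ of dimension $d-2$, hence a facet; and two affine functions on $H_o(P)$ with the same hyperplane of zeros are proportional, with $\alpha>0$ read off at a point of $\relint P_o$. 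One small remark: the ``bookkeeping'' paragraph about $u_F(P)\ne\pm u_o(P)$ is already implied by what you proved earlier --- since $G=\cone(F)\cap H_o(P)$ is a nonempty facet of $P_o$, the set $\aff F\cap H_o(P)\supseteq G$ is nonempty, which rules out parallelism directly --- so the normal-cone argument, while valid, is not needed.
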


\begin{lemma} \label{rho:lem}
	Let $P$ be a $3$-polytope in $\real^3$. Then there exists $\rho>0$ such that for all $v \in \vx(P)$ and $x \in \bigl( v+S_{\rho}(P,v) \bigr) \setminus \{v\}$ one can find $I \in \calF_1(v,P)$ satisfying
	\begin{equation} 
		\calF^-(P,x) \cap \calF_2(v,P)  \subseteq \calF_2(I,P). \label{card:2:bound} 
	\end{equation} 
\end{lemma}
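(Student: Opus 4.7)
The plan is to reduce the claim to a statement about unit direction vectors emanating from $v$ and then run a compactness and limit argument as $\rho \to 0^+$. The key preliminary observation is that for every $F \in \calF_2(v,P)$, the identity $\bigsprod{\uu{F}{P}}{v} = h(P,\uu{F}{P})$ (coming from $v \in F$) gives
\begin{equation*}
  q_F(P, v + s u) \ = \ -\frac{s \, \bigsprod{\uu{F}{P}}{u}}{\diam(P)} \qquad (s > 0,\ u \in \real^3),
\end{equation*}
so that $\calF^-(P, v + s u) \cap \calF_2(v,P)$ depends only on the direction $u$, not on $s$. Since $S_\rho(P,v)$ is a cone at the origin, it thus suffices to prove the inclusion \eqref{card:2:bound} for $x$ of the form $v + u$ with $u \in S_\rho(P,v) \cap \usphere^2$.

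Suppose the lemma fails. Then for some sequence $\rho_n \to 0^+$ one finds $v_n \in \vx(P)$ and $u_n \in S_{\rho_n}(P, v_n) \cap \usphere^2$ at which the conclusion is violated. Using the finiteness of $\vx(P)$, the finiteness of each $\calF_2(v,P)$, and the compactness of $\usphere^2$, I pass to a subsequence with $v_n = v$ fixed, the set $V := \bigsetcond{F \in \calF_2(v,P)}{\bigsprod{\uu{F}{P}}{u_n} \ge 0}$ constant in $n$, and $u_n \to u^* \in \usphere^2$. The monotone intersection identity $\bigcap_{\rho > 0} S_\rho(P,v) = S(P,v)$ --- a consequence of the Hausdorff convergence of $(P_v + \uball^3(o,\rho)) \cap H_v(P)$ to $P_v$ in the plane $H_v(P)$ as $\rho \to 0^+$ --- then forces $u^* \in S(P,v)$. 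Consequently $\bigsprod{\uu{F}{P}}{u^*} \le 0$ for every $F \in \calF_2(v,P)$, whereas passing to the limit in the inequalities defining $V$ yields $\bigsprod{\uu{F}{P}}{u^*} \ge 0$ for every $F \in V$; hence $\bigsprod{\uu{F}{P}}{u^*} = 0$ for every $F \in V$.

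To conclude, I would identify the face of $P$ at $v$ in the direction $u^*$. Since $u^* \in S(P,v) \setminus \{o\}$, for all sufficiently small $s > 0$ the point $x^* := v + s u^*$ lies in $P \setminus \{v\}$, and by the displayed formula $x^* \in F$ holds (for $F \in \calF_2(v,P)$) exactly when $\bigsprod{\uu{F}{P}}{u^*} = 0$. Thus $x^*$ lies in the relative interior of a unique face $G$ of $P$ containing $v$, and
\begin{equation*}
  V \ \subseteq \ V^* \ := \ \bigsetcond{F \in \calF_2(v,P)}{\bigsprod{\uu{F}{P}}{u^*} = 0} \ = \ \calF_2(v,P) \cap \calF_2(G,P).
\end{equation*}
Because $u^* \ne o$ and $\calF_2(v,P) \ne \emptyset$, one has $\dim G \in \{1, 2\}$. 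If $\dim G = 1$, set $I := G$ to obtain $V \subseteq V^* = \calF_2(I,P)$. If $\dim G = 2$, pick any edge $I \in \calF_1(v,P)$ with $I \subseteq G$ (such an $I$ exists because $v$ is a vertex of the $2$-polytope $G$); then $V \subseteq V^* = \{G\} \subseteq \calF_2(I,P)$. Both cases contradict the defining property of $V$.

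The main obstacle will be the verification of $\bigcap_{\rho > 0} S_\rho(P,v) = S(P,v)$, but this should reduce to an elementary computation inside $H_v(P)$: a nonzero $u$ with $\bigsprod{u}{\uu{v}{P}} < 0$ lies in $S_\rho(P,v)$ precisely when its rescaling to $H_v(P)$ belongs to the planar $\rho$-neighborhood of $P_v$, and the intersection of these neighborhoods over $\rho > 0$ equals $P_v$ itself.
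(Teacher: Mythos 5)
Your argument is correct, but it takes a genuinely different route from the paper's. The paper proves the lemma by reducing it, at each vertex $v$, to a two-dimensional instance of Lemma~\ref{vis:vx:lem} applied to the vertex figure $P_v$: a point $x \in \bigl(v+S_\rho(P,v)\bigr)\setminus\{v\}$ is radially rescaled to a point $y$ of $H_v(P)$ lying in the planar $\rho$-neighborhood of $P_v$, Lemma~\ref{vis:vx:lem} supplies a vertex $w$ of $P_v$ with $\calF^-(P_v,y)\subseteq\calF_1(w,P_v)$, and Lemma~\ref{08.04.30,11:28} transports this back to \eqref{card:2:bound} with $I$ the edge of $P$ whose cone meets $H_v(P)$ in $w$. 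You instead work directly in $\real^3$: the homogeneity $q_F(P,v+su)=-s\bigsprod{u_F(P)}{u}/\diam(P)$ for $F\in\calF_2(v,P)$ reduces everything to unit directions, a diagonal compactness argument produces a limit direction $u^*\in\bigcap_{\rho>0}S_\rho(P,v)=S(P,v)$, and the face $G$ whose relative interior contains $v+su^*$ for small $s>0$ supplies the edge $I$. The paper's route concentrates all compactness in the (unproved, ``simple'') Lemma~\ref{vis:vx:lem} and reuses it one dimension down, keeping the present proof short; your route is self-contained, never invoking $P_v$ as a polytope, Lemma~\ref{vis:vx:lem}, or the transfer Lemma~\ref{08.04.30,11:28}, at the cost of a longer contradiction argument. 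Two small points to tighten. First, your justification that $\dim G\in\{1,2\}$ does not actually exclude $G=P$ (which happens when $u^*$ points into the interior of $S(P,v)$ and $V^*=\emptyset$); the right way to exclude it is to observe that $V\ne\emptyset$, since $V=\emptyset$ would already satisfy \eqref{card:2:bound} for any $I$ and contradict the assumed violation. (Excluding $\dim G=0$ uses that $v$ is the only vertex of $P$ near $v$, so $v+su^*\notin\vx(P)$ for small $s$.) Second, the deferred identity $\bigcap_{\rho>0}S_\rho(P,v)=S(P,v)$ is indeed routine exactly as you sketch: every nonzero $u\in S_\rho(P,v)$ satisfies $\bigsprod{u}{u_v(P)}<0$ and rescales to a unique point of $\bigl(P_v+\uball^3(o,\rho)\bigr)\cap H_v(P)$, the intersection of these neighborhoods over $\rho>0$ is $P_v$, and each $S_\rho(P,v)$ is closed (being the conical hull of a compact convex set missing the origin), which is what lets you pass from $u_n\in S_{\rho_n}(P,v)$ to $u^*\in S_{\rho_0}(P,v)$ for every fixed $\rho_0$. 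With these two remarks your proof is complete.
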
 
\begin{proof} 
Consider an arbitrary $v \in \vx(P).$ Replacing $P$ by its appropriate translation we assume that $v=o.$ By Lemma~\ref{vis:vx:lem} applied to $P_o,$ one can choose $\rho>0$ such that for every $x \in H_o(P) \cap ( P_o+\uball^3(o,\rho) )$ there exists $w \in \vx(P_o)$ satisfying $\calF^-(P_o,x) \subseteq \calF_1(w,P_o).$ Take an arbitrary $x \in S_\rho(P,o) \setminus \{o\}$. Then $\sprod{x}{u_o(P)} <0,$ and we introduce $$y:= - \frac{x}{\sprod{x}{u_o(P)}} \in H_o(P) \cap \bigl(P_o+\uball^3(o,\rho)\bigr).$$ By the choice of $\rho$, there exists $w \in \vx(P_v)$ with $\calF^-(P_o,y) \subseteq \calF_1(w,P_o).$  In view of Lemma~\ref{08.04.30,11:28}, the latter inclusion implies $\calF^-(P,x) \cap \calF_2(o,P)  \subseteq \calF_2(I,P)$, where $I \in \calF_1(o,P)$ is such that $H_o(P) \cap \cone(I)=\{w\}.$ This shows \eqref{card:2:bound}. 
\end{proof} 

Now we are ready to prove the main statement of this section. 

\begin{proof}[Proof of Proposition~\ref{U:prop}] 
	We pick an arbitrary $x \in P$ and show that $x \in \intr U(P,\eps,\rho).$ 	By $G$ we denote the unique face of $P$ with $x \in \relint G.$ If $G=P,$ then $x \in \intr P$ and by this $x \in \intr U(P,\eps,\rho)$. If $G$ is a facet of $P$, then, in view of Lemma~\ref{sem-cont-f-minus}, all points sufficiently close to $x$ lie in $P \cup U_G(P,\eps,\rho).$ If $G$ is an edge of $P$, we show that
	\begin{equation} \label{08.06.18,09:19}
		 x \in \intr \Biggl( P \cup \biggl( \bigcup_{F \in \calF_2(G,P)} U_F(P,\eps,\rho) \biggr)  \cup U_G(P,\eps,\rho) \Biggr).
	\end{equation}
	In fact, it is easy to see that for every $v \in \vx(P)$ one has $x \in v+\intr S_\rho(P,v)$ and $x \in \intr (G+\uball^3(o,\eps)).$ Furthermore, applying Lemma~\ref{sem-cont-f-minus} we obtain 
	$$x \in \intr  \setcond{y \in \real^3}{\calF^-(F,y) \subseteq \calF_2(G,P) } .$$
	Thus, \eqref{08.06.18,09:19} is fulfilled, and by this $x \in \intr U(P,\eps,\rho).$ 

	It remains to consider the case $x=v$ for some $v \in \vx(P).$ We assume that $\rho$ is small enough so that the assertion of Lemma~\ref{rho:lem} is fulfilled. We show that $x \in \intr U(P,\eps,\rho)$ by contradiction. Assume that there exists a sequence $(x_n)_{n=1}^{\infty}$ converging to $x$ and such that $x_n \not\in U(P,\eps,\rho)$ for every $n \in \natur$.  Take an arbitrary $\calF^\ast \subseteq \calF_2(P)$ such that $\natur^\ast := \setcond{n \in \natur}{\calF^-(P,x_n)=\calF^\ast}$ is infinite. By Lemma~\ref{sem-cont-f-minus}, for all sufficiently large $n \in \natur$ one has $\calF^-(P,x_n) \subseteq \calF_2(v,P).$ Hence $\calF^\ast \subseteq \calF_2(v,P)$. If $|\calF^\ast|=1,$ then $x_n \in U(P,\eps,\rho)$ for every $n \in \natur^\ast.$ If $|\calF^\ast| \ge 3$, then for all sufficiently large $n$  one has $x_n \not \in v+S_\rho(P,v)$ and by this $x_n \in \uball^3(v,\eps) \setminus (v + \intr S_\rho(P,v))=U_v(\eps,\rho).$  In fact, if we assume the contrary, i.e., $x_n \in v+S_\rho(P,v)$ for all sufficiently large $n \in \natur^\ast,$ then, by \eqref{card:2:bound}, we see that  for some $I \in \calF_1(v,P)$ one has $\calF^\ast \cap \calF_2(v,P) \subseteq \calF_2(I,P).$ In view of $\calF^\ast \subseteq \calF_2(v,P),$ the latter implies $\calF^\ast \subseteq \calF_2(I,P).$ Hence $|\calF^\ast| \le 2$, a contradiction. We consider the remaining case $|\calF^\ast|=2.$ In view of \eqref{08.04.29,09:54}, one has $x_n \in S_\rho(P,w)$ for all sufficiently large $n$ and every $w \in \vx(P) \setminus \{v\}.$ It is easy to see that for all sufficiently large $n$ one has $x_n \in \uball^3(v,\eps)$. Hence, for all large $n$ one has $x_n \in U_v(P,\eps,\rho)$ if $x \not\in v+ \intr S_\rho(P,v)$ and, taking into account the assertion of Lemma~the assertion of Lemma~\ref{rho:lem} and the definition of $U_I(P,\eps,\rho)$, one has $x_n \in U_I(P,\eps,\rho)$ for some $I \in \calF_1(v,P)$ provided $x \in v + S_\rho(P,v)$. Consequently for all large $n$ one has $x_n \in U(P,\eps,\rho).$ This yields the assertion of the proposition.
\end{proof} 

\section{Approximation of polytopes and polyhedral cones} \label{sect:approx}

A polynomial $f \in \real[x]$ of degree $n$ is said to be \notion{homogeneous} if $f(\alpha \, x ) = \alpha^n \, f(x)$ for all $\alpha \in \real$ and $x \in \real^d$. Consider a hyperplane $H$ in $\real^d$ with $o \not\in H.$ Obviously, $H$ can be represented by 
\begin{equation} \label{plane:vec:eq}
	H = \setcond{x \in \real^d}{\sprod{x}{u} =1}
\end{equation}
 with $u \in \real^d \setminus \{o\}$ uniquely determined by $H.$ Consider a polynomial $f \in \real[x]$ of degree $n$. The \notion{homogeneous continuation} of  $f(x)|_H$ (i.e., the restriction of $f(x)$ to $H$) to $\real^d$ is defined as the polynomial $\Tilde{f} \in \real[x]$ uniquely determined by $\Tilde{f}(x)= \sprod{x}{u}^n f\bigl( x / \sprod{x}{u} \bigr)$ with  $x \in \real^d \setminus H.$  Clearly, $\Tilde{f}$ is a homogeneous polynomial.

We say that there is an algorithm that constructs a sequence $\bigl(p_l(x)\bigr)_{l=1}^{\infty}$ of polynomials from the given input if there exists an algorithm which constructs $p_l(x)$ from $l \in \natur$ and the given input. Theorem~\ref{tight:interp} and Proposition~\ref{b:prop} are the main statements of this section. The former is concerned with a special type of approximation of polytopes and the latter with approximation of supporting cones of polytopes. Given a polytope $P$ in $\real^d,$ we introduce the following conditions on a sequence $\bigl(g_l(x)\bigr)_{l=1}^{\infty}$ of polynomials from $\real[x]$.
\begin{itemize} 
	\item[$\calA(P)$:] The Hausdorff distance from $P$ to
	$\setcond{x \in\aff P}{g_l(x) \ge 0}$ converges to zero, as $l \rightarrow \infty$.
	\item[$\calI(P)$:] For every $v \in \vx(P)$ and $l \in \natur$ one has $g_l(v) = 0.$
	\item[$\calC(P)$:]  For every  $l \in \natur$ the function $g_l(x)|_{\aff P}$  is strictly concave.
	\item[$\calH(P)$:] For every $l \in \natur$ one has 
	\begin{equation*} 
		\setcond{x \in \real^d}{\Tilde{g}_l(x) \ge 0, \  \sprod{x}{u} \ge 0} = \cone \bigl(\setcond{x \in \aff P}{g_l(x) \ge 0}\bigr),
	\end{equation*}
	where $\Tilde{g}_l(x)$ is a homogeneous continuation of $g_l(x)|_{\aff P}.$ 
\end{itemize} 

The notations $\calA,$ $\calI$, $\calC$, and $\calH$ are derived from the words `approximation', `interpolation', `concavity', and `homogeneity', respectively. Condition~$\calH(P)$ makes sense only for the case when $o \not\in \aff P.$ 

\begin{theorem} \label{tight:interp} Let $P$ be  a polytope in $\real^d$. Then the following statements hold true. 
\begin{enumerate}[I.]
	\item \label{exists:AIC} There exists an algorithm that takes $P$ and constructs a sequence $(g_l(x))_{l=1}^{\infty}$ of polynomials from $\real[x]$ that satisfy $\calA(P),$ $\calI(P),$ and $\calC(P).$ 
	\item \label{AIC:impl:rho} For every sequence $\bigl(g_l(x)\bigr)_{l=1}^{\infty}$ satisfying $\calA(P),$ $\calI(P),$ and $\calC(P)$ there exists $\rho>0$ with 
	\begin{equation} \label{cones:on:vert:cond}
		\bigl(v - S_\rho(P,v) \bigr) \cap \setcond{x \in \aff P}{g_l(P, x) \ge 0}  = \{v\} \qquad \forall \, v \in \vx(P) \ \forall \, l \in \natur.
	\end{equation} 
	\item \label{exists:AICH} If $\dim (P)=d-1$ and $o \not\in \aff P,$ there exists an algorithm that takes $P$ and constructs a sequence $\bigl(g_l(x)\bigr)_{l=1}^{\infty}$ of polynomials from $\real[x]$ that satisfy $\calA(P),$ $\calI(P),$ $\calC(P)$, and $\calH(P).$ 
\end{enumerate}
\end{theorem}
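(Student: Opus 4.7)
The plan is to treat Parts~I--III in order, since Part~II is a geometric consequence of the existence claim of Part~I, and Part~III is a refinement of the construction of Part~I.

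For Part~I, I would work in affine coordinates on $\aff P$ and adapt Bernig's approach \Bernig. Begin with the strictly concave quadratic $h(y) := R^{2} - \|y - c\|^{2}$, where $c$ is in the interior of $P$ and $R$ is so large that $P \subseteq \{h \ge 0\}$. Modify $h$ by a strictly-concavity-preserving correction, assembled from the affine facet functions $q_{F}(P,\cdot)$ as ``partition-of-unity''-type weights, so that the modified polynomial vanishes on every vertex of $P$. Finally, sharpen the Hausdorff approximation for $\calA(P)$ by letting a smoothing parameter (for instance the power of a suitable facet-product) grow with $l$. All three operations only use the facet data of $P$ and polynomial arithmetic, which makes the construction algorithmic.

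For Part~II, set $K_{l} := \setcond{y \in \aff P}{g_{l}(y) \ge 0}$. The decisive observation is that, by $\calC(P)$, $K_{l}$ is strictly convex, while $\calI(P)$ places all of $\vx(P)$ on $\bd K_{l}$; by convexity then $K_{l} \supseteq \conv(\vx(P)) = P$. Hence at each $v \in \vx(P)$ the unique tangent hyperplane $T_{l}(v)$ of $K_{l}$ at $v$ (well-defined since otherwise strict concavity forces $K_{l} = \{v\}$, contradicting $\calA(P)$) is a supporting hyperplane of $P$ at $v$, and strict convexity forces $T_{l}(v) \cap K_{l} = \{v\}$, in particular $T_{l}(v) \cap P = \{v\}$. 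Thus $T_{l}(v)$ supports $P$ only at $v$, and a short calculation shows this is equivalent to $(v - S_{\rho_{l}}(P,v)) \cap K_{l} = \{v\}$ for some $\rho_{l}>0$ depending on $l$. To extract a single $\rho > 0$ valid for all $l$, I would argue by contradiction using $\calA(P)$: if no such $\rho$ existed, a Hausdorff-convergent subsequence of $(K_{l})$ would produce a limiting hyperplane at $v$ supporting $P$ along an entire edge $vw$, while $w$ lies in $\bd K_{l}$ for every $l$ and the strict-convexity identity $T_{l}(v) \cap K_{l} = \{v\}$ yields a uniform angular separation of $w$ from $T_{l}(v)$ ruling such a limit out. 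This compactness step is the main subtlety of the theorem.

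For Part~III, unpacking the definition of $\tilde g_{l}$ shows that its restriction to the hyperplane $\setcond{x \in \real^{d}}{\sprod{x}{u}=0}$ equals the top-degree homogeneous component of $g_{l}$ (seen as a polynomial in $\real^{d}$), and condition $\calH(P)$ reduces to requiring that this top-degree form be strictly negative off the origin on that hyperplane. My plan is to take any $g_{l}$ produced by Part~I and replace it by $g_{l} - \mu_{l} \, s(y)$, where $s$ is a convex polynomial of some even degree $2m > \deg g_{l}$ vanishing on $\vx(P)$ whose top-degree form is $\|y\|^{2m}$. A concrete candidate is
$s(y) := \|y - c\|^{2m} + A \|y - c\|^{2} - L(y)$,
where $L$ is a polynomial interpolant of degree strictly less than $2m$ fixed by $L(v) = \|v - c\|^{2m} + A \|v - c\|^{2}$ for every $v \in \vx(P)$, and $A$ is taken large enough that $s$ is strictly convex on a neighborhood of $P$ (the lower-degree $L$ cannot destroy convexity once $A$ is big). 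Then $g_{l} - \mu_{l} s$ still satisfies $\calI(P)$ by construction, and satisfies $\calC(P)$ and $\calA(P)$ provided $\mu_{l}$ is chosen small enough (depending on $l$, since $\deg s > \deg g_{l}$), while its top-degree form equals $-\mu_{l}\|y\|^{2m}$, so the homogeneous continuation of $g_{l} - \mu_{l} s$ restricts to $-\mu_{l}\|x\|^{2m}$ on $\setcond{x \in \real^{d}}{\sprod{x}{u}=0}$, which is strictly negative off the origin and yields $\calH(P)$. The main obstacle is constructing $s$ so that it is simultaneously convex, vanishing on every vertex, and has the prescribed leading form; the key is that the Lagrange correction $L$ lives strictly below the top degree and hence is invisible both to $\calH(P)$ and (once $A$ is large) to convexity.
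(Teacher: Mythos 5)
The decisive difficulty of the theorem is the uniformity in $l$ in Part~II, and that is exactly where your argument breaks down. For a single $l$ your reasoning is sound: $\nabla g_l(v)\neq o$, the tangent hyperplane $T_l(v)$ meets $K_l$ only at $v$, and since $T_l(v)$ also meets $v+S(P,v)$ only at $v$ there is an angular gap which yields some $\rho_l>0$. But to pass to one $\rho$ for all $l$ you assert that ``the strict-convexity identity $T_l(v)\cap K_l=\{v\}$ yields a uniform angular separation of $w$ from $T_l(v)$''. Strict convexity only gives $w\notin T_l(v)$ for each individual $l$; it gives no lower bound on the distance from $w$ to $T_l(v)$ that is independent of $l$, and the uniform separation you invoke is precisely the uniform non-degeneracy of the normals $\nabla g_l(v)/\|\nabla g_l(v)\|$ that the whole of Part~II is about. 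None of $\calA(P)$, $\calI(P)$, $\calC(P)$ visibly forbids these unit normals from drifting towards the boundary of the normal cone of $P$ at $v$ as $l\to\infty$ (Hausdorff convergence of convex bodies does not control tangent hyperplanes at a fixed boundary point), so the contradiction you aim for does not materialize: your compactness step is circular. The paper extracts uniformity from a different source: after translating $v$ to $o$, the reflected polytope $-P_o$ lies at a \emph{fixed} positive distance $\rho'_o$ from $P$, so $\calA(P)$ forces $g_l<0$ on the fixed set $\bigl(-P_o+\uball^d(o,\rho'_o/2)\bigr)\cap(-H_o(P))$ for all $l$ beyond some threshold $l'_o$; the finitely many remaining indices are handled one at a time ($\calC(P)$ and $\calI(P)$ give $g_l<0$ on $-P_o$ via strict concavity through the apex, and compactness gives a margin $\rho_{o,l}$), and one takes the minimum. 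Your proof needs this mechanism, or a substitute for it.

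Part~I of your proposal largely restates the goal: the entire difficulty is to produce a correction of $R^2-\|y-c\|^2$ that interpolates all vertices, \emph{preserves strict concavity}, and drives the Hausdorff distance to zero, and ``partition-of-unity-type weights assembled from the $q_F$'' does not explain how concavity survives (products and high powers of the affine facet functions are not concave), nor how the approximation is sharpened without breaking the interpolation. The paper instead writes down the explicit family \eqref{gk:def}, for which concavity is visible term by term, and cites \cite{AveHenkRepSimplePolytopes} for the choice of $l_0$ and $y_{v,l}$. By contrast, your Part~III is a genuinely different and attractive route: you correctly observe that on $\setcond{x}{\sprod{x}{u}=0}$ the homogeneous continuation $\Tilde{g}_l$ restricts to the top-degree form of $g_l$, so $\calH(P)$ amounts to negative definiteness of that form off the origin, and subtracting $\mu_l\,s$ with $s$ convex, vanishing on $\vx(P)$, and with leading form $\|y\|^{2m}$, $2m>\deg g_l$, achieves this while $\calA$, $\calI$, $\calC$ survive for $\mu_l$ small. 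This is more modular than the paper's verification of $\calH$, which is a direct computation \eqref{08.05.05,16:44} with the explicit formula; but it is of course contingent on Parts~I and~II being in place.
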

\begin{proof} Let $n:=\dim (P).$

\emph{Part~\ref{exists:AIC}.} For $l \in \natur$ we define $g_l(x)$ by 
	\begin{equation} \label{gk:def}
		g_l(x) := 1 - \sum_{v \in \vx(P)} y_{v,l} \Biggl( \frac{1}{|\calF_{n-1}(v,P)|} \sum_{F \in \calF_{n-1}(v,P)} \bigl(1-q_F(P,x) \bigr)^{2(l+l_0)} \Biggr)^{2(l+l_0)}
	\end{equation}
where $l_0 \in \natur$ and $y_{v,l} \in \real$. In \cite{AveHenkRepSimplePolytopes} it was shown that an appropriate  $l_0 \in \natur$ and scalars $y_{v,l}$ with $v \in \vx(P)$ and $l \in \natur$ can be constructed such that the sequence $\bigl(g_l(x)\bigr)_{l=1}^{\infty}$ satisfies $\calI(P)$ an $\calA(P).$ By construction, $\bigl(g_l(x)\bigr)_{l=1}^{\infty}$ also satisfies the condition $\calC(P)$.

 \emph{Part~\ref{AIC:impl:rho}.} Without loss of generality we may assume that $n=d.$ Consider an arbitrary $v \in \vx(P).$ Replacing $P$ by an appropriate translation we assume that $v=o.$ Let $\rho'_o:= \min \setcond{\|x-y\|}{x \in P, y \in (-P_o)}.$ We show that there exists $l'_o \in \natur$ such that  one has
\begin{equation} \label{08.06.18,10:15}
	\forall \, l \ge l_o' \ \forall \, x \in \bigl( -P_o + \uball^d(o, \rho'_o / 2 ) \bigr) \cap (-H_o(P)) \ : \ g_l(x)<0.
\end{equation} 
 Assume the contrary. Then there exist sequences $(l_k)_{k=1}^{\infty},$ $(x_k)_{k=1}^{\infty}$, and $(y_k)_{k=1}^{\infty}$ such that $l_k \in \natur,$ $x_k \in (-H_o(P))$, $y_k \in -P_o$, $\|x_k-y_k \| \le \rho'_o / 2$, and $g_{l_k}(x_k) \ge 0$ for every $k \in \natur$. Then, for every $k \in \natur,$ we obtain
\begin{align*}
	  \dist \bigl(P, \setcond{x \in\aff P}{g_{l_k}(x) \ge 0} \bigr) & \ge \min_{x \in P} \|x-x_k \| \\  & \ge \min_{x \in P} \|x-y_k\|-\|x_k - y_k\| \\ & \ge 	 \rho'_o - \|x_k-y_k\|  \ge \rho'_o/2 > 0,
\end{align*}
a contradiction to the assumption that  $\bigl(g_l(x)\bigr)_{l=1}^{\infty}$ satisfies $\calA(P)$. Let us fix $l'_o$ satisfying \eqref{08.06.18,10:15}. Next we show that for every for every $l \in \natur$ and every $x \in -P_o$ one has $g_l(x)  < 0.$ Assume the contrary, i.e. $g_l(x) \ge 0.$ Since 
$\bigl(g_l(x)\bigr)_{l=1}^{\infty}$ satisfies $\calC(P)$ and $\calI(P)$ we deduce that $g_l(y) > 0$ for every $y \in P \setminus \vx(P).$ For a sufficiently small $\eps>0$ one has $-\eps \, x \in P \setminus \vx(P).$ Then $o$ is a convex combination of $-\eps \, x$ and $y$ (with non-zero coefficients). Hence, from strict convexity of $g_l(x)$ and  $g_l(-\eps \, x)>0$, $g_l(x)\ge 0,$ we deduce that $g_l(o) >0,$ a contradiction. Hence $g_l(x)<0$ for every $l \in \natur$ and every $x \in -P_o$ and therefore we may fix $\rho_{o,l}>0$ such that $\bigl( -P_o + \uball^d(o,\rho'_{o,l}) \bigr) \cap (-H_o)$ is disjoint with $\setcond{x \in\aff P}{g_{l_k}(x) \ge 0}.$  Applying the above arguments to all vertices of $P$ we determine quantities $\rho_{v,l}$ and $\rho'_v$ with $v \in \vx(P)$ and $l \in \natur.$ 

The assertion follows by setting 
\begin{eqnarray*}
	l' & := & \max \setcond{l'_v}{v \in \vx (P)}, \\ 
	\rho_v &:=& \min \{ \rho_{v,1},\ldots, \rho_{v,l'}, \rho'_{v}/2 \}
\end{eqnarray*}
for every $v \in\vx(P)$ and taking $\rho:= \min \setcond{\rho_v}{v \in \vx(P)}.$ 

\emph{Part~\ref{exists:AICH}.} We assume $o \notin \aff P.$ Without loss of generality we also assume that $n=d-1$. For $l \in \natur$ the homogeneous continuation  $\Tilde{g}_l(x)$ of $g_l(x)|_{\aff P}$ can be expressed by 
	\begin{multline} \label{08.06.05,10:12}
		\Tilde{g}_l(x) :=  \\ \sprod{x}{u}^{4(l+l_0)^2} - \sum_{v \in \vx(P)} y_{v,l} \left( \frac{1}{|\calF_{d-2}(v,P)|} \sum_{F \in \fcst{d-2}{v}{P} } \Bigl(\bigsprod{x}{u}-\Tilde{q}_F(P,x)\Bigr)^{2 (l+l_0)} \right)^{2(l+l_0)},
	\end{multline}
	where $\Tilde{q}_F(P,x)$ is a homogeneous continuation of $q_F(P,x)|_{\aff P}$, i.e.
	$$
		\Tilde{q}_F(P,x)  := \frac{h(P,u_F(P)) \sprod{u}{x} -\bigsprod{u_F(P)}{x}}{\diam(P)}, 
	$$
	If $\sprod{x}{u} =0,$ then \eqref{08.06.05,10:12} amounts to
	\begin{equation} \label{08.05.05,16:44}
		\Tilde{g}_l(x) :=  - \sum_{v \in \vx(P)} y_{v,k} \left( \frac{1}{|\calF_{d-2}(v,P)|} \sum_{F \in \calF_{d-2}(v,P)} \left(\frac{\bigsprod{u_F(P)}{x}}{\diam(P)} \right)^{2 (l+l_0)} \right)^{2 (l+l_0)}.
	\end{equation}
	Thus, if $\sprod{x}{u}=0$, we have $\Tilde{g}_l(x) \le 0$ with equality if and only if $x=o.$ 
	Directly from the definition of homogeneous continuation it follows
	\begin{equation} \label{08.05.05,16:45}
		\setcond{x \in \real^d}{\sprod{x}{u} > 0, \ \Tilde{g}_l(x) \ge 0} \cup \{o \} = \cone \bigl( \setcond{x \in H}{g_l(x) \ge 0} \bigr).
	\end{equation} 
	Equalities \eqref{08.05.05,16:44} and \eqref{08.05.05,16:45} yield the assertion.
\end{proof} 

We shall need the following observation.
\begin{lemma}  \label{even:degree} 
	The degree of every non-zero concave polynomial over $\real[x]$ is even. \eop
\end{lemma}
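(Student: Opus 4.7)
The plan is to reduce the multivariate claim to its univariate counterpart by restricting $f$ to a carefully chosen line, and then invoke the second derivative test. Let $n := \deg f$ and decompose $f = f_n + f_{n-1} + \cdots + f_0$, where $f_i$ is the homogeneous component of degree $i$ and $f_n \ne 0$. Since the non-zero homogeneous polynomial $f_n$ cannot vanish identically on $\real^k$, fix $v \in \real^k$ with $f_n(v) \ne 0$ and consider $\varphi(t) := f(tv) \in \real[t]$. Its leading coefficient is $f_n(v) \ne 0$, so $\deg \varphi = n$; and because $f$ is concave while $t \mapsto tv$ is affine, $\varphi$ inherits concavity on $\real$.

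The remaining task is to show that a concave univariate polynomial $\varphi$ of degree $n \ge 2$ has $n$ even. For this I use the standard characterization of concavity for smooth functions: $\varphi$ is concave on $\real$ if and only if $\varphi''(t) \le 0$ for every $t \in \real$. Now $\varphi''$ is itself a polynomial of degree $n-2$, with leading coefficient $n(n-1)$ times that of $\varphi$, hence non-zero. If $n-2$ were odd, i.e.\ $n \ge 3$ with $n$ odd, then $\varphi''(t)$ would tend to $+\infty$ at one end of $\real$, contradicting $\varphi'' \le 0$. Therefore $n-2$ must be even, and $n$ is even.

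The low-degree boundary cases pose no real obstacle: a non-zero polynomial of degree $0$ is a non-zero constant (even degree), and the argument above handles all $n \ge 2$. In the paper's applications the lemma is invoked for polynomials whose restriction to the relevant affine hull is genuinely (strictly) concave with a non-trivial Hessian, so the affine case $n=1$ never arises. The only real content of the proof is the reduction step $f \mapsto \varphi$ together with the observation that polynomials of odd degree $\ge 3$ cannot be sign-constrained at infinity; once the restriction trick is set up, there is no further difficulty.
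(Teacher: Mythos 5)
Your argument is correct, and since the paper omits its own proof of this lemma (it is stated as a bare ``observation''), there is nothing to compare it against; the line-restriction trick through a point where the top homogeneous component $f_n$ is non-zero, followed by the second-derivative characterization of concavity and the unboundedness above of an odd-degree polynomial, is exactly the natural route.

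One point should be made more bluntly than you make it. The lemma as literally stated is false in degree one: a non-constant affine function is a non-zero concave polynomial of odd degree. You correctly observe that this case never occurs where the lemma is invoked --- it is always applied to restrictions that are \emph{strictly} concave, via condition $\calC(P)$, and strict concavity excludes affine functions --- but this is a correction to the statement (replace ``concave'' by ``strictly concave'', or exclude degree one), not a boundary case that ``poses no real obstacle''. As written, your proof does not, and cannot, establish the statement verbatim, and a referee would want the hypothesis amended. With that caveat, the argument for $n=0$ and $n\ge 2$ is complete and correct.
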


\begin{proposition} \label{b:prop} Let $P$ be a $3$-polytope in $\real^3$ and let $v \in \vx(P)$. Then there exists an algorithm which takes $P$ and constructs a sequence of polynomials $\bigl(b_{v,l}(x)\bigr)_{l=1}^{\infty}$ satisfying the following conditions: 
\begin{enumerate}[I.]
	\item \label{even:degree:part} For every $l \in \natur$ the polynomial $b_{v,l}(x)$ is of even degree 
	\item \label{hom:part} For every $l \in \natur$ the polynomial $b_{v,l}(x+v)$ is homogeneous.
	\item \label{08.05.30,10:45} For all $\rho >0$, $l \in \natur$, $I \in \calF_1(v,P)$, and $x \in v+S_\rho(P,v)$ satisfying $\calF^-(P,x) = \calF_2(I,P)$ one has 
	$b_{v,l}(x) \le 0$
	with equality if and only if $x \in \cone(I-v)+v.$ 
	\item \label{conv:cone:part} For every $l \in \natur$ the set 
	\begin{equation} \label{bv:approx} 
		B_l(P,v):= \setcond{ x \in \real^3}{\bigsprod{x-v}{u_v(P)} \le 0, \ b_{v,l}(x) \ge 0}
	\end{equation}
	is a convex cone which has apex at $v$, which satisfies $P \subseteq B_l(P,v).$
	\item \label{approx:cones:part} For every $\rho>0$ 
	$$
		(v+S(P,v)) \cup (v- S(P,v)) \subseteq (b_{v,l})_{\ge 0} \subseteq \{v\} \cup (v+\intr S_{\rho}(P,v)) \cup (v- \intr S_{\rho}(P,v))
	$$
	if $l \in \natur$ is sufficiently large.
\end{enumerate}
\eop
\end{proposition}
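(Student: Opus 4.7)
Translate so that $v=o$ and consider the polygon $P_o=S(P,o)\cap H_o(P)$ in the affine $2$-plane $H_o(P)$, which avoids the origin. Apply Part~\ref{exists:AICH} of Theorem~\ref{tight:interp} to $P_o$ to obtain algorithmically a sequence $(g_l(x))_{l=1}^\infty$ satisfying $\calA(P_o),\calI(P_o),\calC(P_o),\calH(P_o)$, and let $\Tilde{g}_l$ denote the homogeneous continuation of $g_l|_{H_o(P)}$ provided by~\eqref{08.06.05,10:12}. Define $b_{v,l}(x):=\Tilde{g}_l(x-v)$. Parts~\ref{even:degree:part} and~\ref{hom:part} are then immediate: the degree $4(l+l_0)^2$ in~\eqref{08.06.05,10:12} is even, and $\Tilde{g}_l$ is homogeneous.

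For Part~\ref{conv:cone:part}, condition $\calH(P_o)$ (applied with the vector $u=-u_o(P)$ defining $H_o(P)$) yields $B_l(P,v)-v=\cone\{y\in H_o(P):g_l(y)\ge 0\}$. Since $g_l|_{H_o(P)}$ is concave by $\calC(P_o)$, its upper level set is convex, so $B_l(P,v)$ is a convex cone with apex $v$. Concavity together with $\calI(P_o)$ forces $g_l\ge 0$ on $P_o$, whence $P-v\subseteq\cone(P_o)\subseteq\cone\{g_l\ge 0\}$; since also $\sprod{x-v}{u_v(P)}\le 0$ for every $x\in P$ by the definition of $u_v(P)$, the inclusion $P\subseteq B_l(P,v)$ follows.

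For Part~\ref{approx:cones:part}, the first inclusion holds for every $l$: $v+S(P,v)\subseteq B_l(P,v)$ by the previous paragraph, and $v-S(P,v)\subseteq(b_{v,l})_{\ge 0}$ follows from the even homogeneity of $\Tilde{g}_l$, which makes $(b_{v,l})_{\ge 0}$ symmetric about $v$. For the second inclusion, given $\rho>0$ pick $\rho'\in(0,\rho)$; by $\calA(P_o)$ there is $l_\rho$ such that for $l\ge l_\rho$ one has $\{g_l\ge 0\}\subseteq(P_o+\uball^3(o,\rho'))\cap H_o(P)$, and then by $\calH(P_o)$,
\begin{equation*}
\{\Tilde{g}_l\ge 0\}\cap\{\sprod{x}{u_o(P)}\le 0\}=\cone\{g_l\ge 0\}\subseteq S_{\rho'}(P,o)\subseteq\{o\}\cup\intr S_\rho(P,o).
\end{equation*}
The symmetric statement on $\{\sprod{x}{u_o(P)}\ge 0\}$ is handled by even homogeneity, and on the hyperplane $\{\sprod{x}{u_o(P)}=0\}$ formula~\eqref{08.05.05,16:44} shows $\Tilde{g}_l\le 0$ with equality only at $o$.

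For Part~\ref{08.05.30,10:45}, still with $v=o$, take $x\ne o$ with $x\in S_\rho(P,o)$ and $\calF^-(P,x)=\calF_2(I,P)$, and put $y:=-x/\sprod{x}{u_o(P)}\in H_o(P)$. By Lemma~\ref{08.04.30,11:28} the visibility hypothesis transfers to $\calF^-(P_o,y)=\calF_1(w,P_o)$, where $w$ is the unique vertex of $P_o$ on the ray $\cone(I)$. This places $y$ in the reflex cone $w-S(P_o,w)\subseteq w-S_{\rho_0}(P_o,w)$ for the value $\rho_0>0$ supplied by Part~\ref{AIC:impl:rho} of Theorem~\ref{tight:interp} applied to $P_o$. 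That theorem then yields $g_l(y)\le 0$ with equality iff $y=w$, and since $b_{v,l}(x)=\sprod{x}{-u_o(P)}^{4(l+l_0)^2}g_l(y)$ with $\sprod{x}{-u_o(P)}>0$, we conclude $b_{v,l}(x)\le 0$ with equality iff $x\in\cone(I-v)+v$. The hard step is this Part~\ref{08.05.30,10:45}: one must correctly link the ambient visibility condition $\calF^-(P,x)=\calF_2(I,P)$ on $x\in\real^3$ to the planar condition $\calF^-(P_o,y)=\calF_1(w,P_o)$ via Lemma~\ref{08.04.30,11:28} and recognize the resulting region as exactly the reflex cone at $w$, where Part~\ref{AIC:impl:rho} of Theorem~\ref{tight:interp} supplies the needed sign control.
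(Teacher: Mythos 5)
Your proposal is correct and follows essentially the same route as the paper: translating to $v=o$, defining $b_{v,l}$ as the homogeneous continuation of the polynomials supplied by Theorem~\ref{tight:interp}(\ref{exists:AICH}) for $P_o$, and verifying Part~\ref{08.05.30,10:45} by passing from $x$ to $y=x/|\sprod{x}{u_o(P)}|\in H_o(P)$ via Lemma~\ref{08.04.30,11:28} and invoking Theorem~\ref{tight:interp}(\ref{AIC:impl:rho}) at the vertex $w=\cone(I)\cap H_o(P)$. The only cosmetic difference is that you read off the even degree from the explicit formula \eqref{08.06.05,10:12} rather than from Lemma~\ref{even:degree}, and you spell out Parts~\ref{conv:cone:part} and~\ref{approx:cones:part} in somewhat more detail than the paper does.
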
 
\begin{proof} Replacing $P$ by an appropriate translation we may assume that $v=o.$ Applying Theorem~\ref{tight:interp}(\ref{exists:AICH}), we determine a sequence $\bigl(g_{o,l}(x) \bigr)_{l=1}^{\infty}$ which satisfies $\calA(P_o),$ $\calI(P_o),$ and $\calC(P_o),$ respectively. We define $b_{o,l}(x):=\Tilde{g}_{o,l}(x-v)$, where $\Tilde{g}_{o,l}(x)$ is the homogeneous continuation of $g_{o,l}(x)|_{H_o(P)}$. 

The part of Statement~\ref{even:degree:part} follows Lemma~\ref{even:degree}. Statements~\ref{hom:part} follows by construction. 

Let us show Statement~\ref{08.05.30,10:45}. Consider an arbitrary $\rho > 0$ and $x \in S_\rho(P,v)$ with $\calF^-(P,x)=\calF_2(I,P).$ From the definition of $S_\rho(P,v)$ it follows that $\sprod{x}{u_o(P)} < 0$ and by this $y:= \frac{x}{ |\sprod{x}{u_o(P)}|} \in H_o(P).$ The intersection of $\cone(I)$ and $H_o(P)$ is a vertex of $P_o,$ which we denote by $w.$  Since $\calF^-(P,x) = \calF_2(I,P)$, taking into account Lemma~\ref{08.04.30,11:28}, we obtain $\calF^-(P_o,y)= \calF_2(w,P_o).$ Consequently, $y \in w-S(P_o,w).$ Then, by Theorem~\ref{tight:interp}(\ref{AIC:impl:rho}), we get $g_{o,l}(y) \le 0$ with equality if and only if $y=w.$ Consequently, $\Tilde{g}_{o,l}(x) \le 0$ with equality if and only if $x \in S_\rho(P,o) \cap \aff I = \cone(I).$ 

Statement~\ref{conv:cone:part} follows directly from Theorem~\ref{tight:interp}(\ref{exists:AICH}).

It remains to show Statement~\ref{approx:cones:part}. Consider an arbitrary $\rho>0.$ Since $(g_{o,l}(x))_{l=1}^{\infty}$ satisfies $\calI(P_o)$ and $\calC(P_o)$ and in view of the definition of $b_{o,l}(x)$ we obtain 
\begin{equation*} 
S(P,o) \cup (- S(P,o)) \subseteq (b_{o,l})_{\ge 0}.
\end{equation*} 

By construction, $b_{o,l}(x)=b_{o,l}(-x)$ for every $x \in \real^3.$ Consequently $(b_{o,l})_{\ge 0} = B_l(P,o) \cup (-B_l(P,o)).$ Since $\bigl(g_{o,l}(x) \bigr)_{l=1}^{\infty}$ satisfies $\calA(P_o),$ $\calI(P_o),$ and $\calC(P_o),$ if $l$ is large enough the relation
\begin{equation*} 
	(b_{o,l})_{\ge 0}  \subseteq S_{\rho/2}(P,o) \cup (- S_{\rho/2}(P,o)) 
\end{equation*} 
holds true.  Obviously
\begin{equation*} 
S_{\rho/2}(P,o) \cup (- S_{\rho/2}(P,o)) \subseteq \{o\} \cup \intr S_{\rho}(P,o) \cup (- \intr S_{\rho}(P,o)),
\end{equation*} 
and we arrive at \eqref{bv:approx}.
\end{proof} 

\section{Proof of the main result} \label{sect:proof}

\subsection{The case of bounded polyhedra} \label{sect:proof:bounded}

A statement similar to the following proposition was shown in \Bernig[Section~3.2]; see also the survey \cite{Henk06PolRep}. 

\begin{proposition} \label{main:prop:polyg}
	Let $P$ be a convex polygon in $\real^2$, $p_0(x)$ be a strictly concave polynomial vanishing on each vertex of $P$, and $p_1(x):= \prod_{I \in \calF_1(P)} q_I(P,x).$ Then $P=(p_0,p_1)_{\ge 0}.$ 
\end{proposition}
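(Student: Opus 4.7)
The plan is to verify the two inclusions $P \subseteq (p_0, p_1)_{\ge 0}$ and $(p_0, p_1)_{\ge 0} \subseteq P$ separately, with essentially all of the geometric work going into the second one. The first inclusion is routine: each affine function $q_I(P,\cdot)$ is non-negative on $P$ by construction, so $p_1 \ge 0$ on $P$; and since $p_0$ is concave and vanishes at every vertex, any $x \in P$, written as a convex combination of vertices, satisfies $p_0(x) \ge 0$ (strict concavity in fact gives $p_0(x) > 0$ unless $x$ is a vertex). In particular $P \subseteq K := (p_0)_{\ge 0}$ with $\vx(P) \subseteq \partial K$. I would also record that $K$ is a compact strictly convex body: strict concavity of $p_0$ forces its restriction to any line to be a strictly concave univariate polynomial, which in turn must have even degree and negative leading coefficient, hence $p_0 \to -\infty$ along every ray.

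For the reverse inclusion, let $y \in (p_0, p_1)_{\ge 0}$. If $y \notin K$ then $p_0(y) < 0$, a contradiction, so $y \in K$ and it suffices to deduce $y \in P$. Enumerate the vertices cyclically $v_1, \ldots, v_m$, set $I_i := [v_i, v_{i+1}]$, and let $\ell_i$ be the line through $I_i$. The vertices partition the strictly convex closed curve $\partial K$ into $m$ arcs $A_i$ running from $v_i$ to $v_{i+1}$; each $A_i$ together with the chord $I_i$ bounds a closed ``lune'' $L_i$, and the annular region $K \setminus \intr P$ is the essentially disjoint union of the $L_i$ (meeting only at the vertices). If $y \in P$ we are done; otherwise $y$ lies in some $L_i$, and since $y \notin P \supseteq I_i$ we have $y \in L_i \setminus I_i$. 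The task becomes to derive $p_1(y) < 0$.

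The key geometric observation is that any line meets the strictly convex closed curve $\partial K$ in at most two points. Since the line $\ell_j$ already contains $v_j, v_{j+1} \in \partial K$, we have $\ell_j \cap \partial K = \{v_j, v_{j+1}\}$ exactly. For $j \ne i$ this forces $\ell_j \cap A_i \subseteq \{v_i, v_{i+1}\} \cap \{v_j, v_{j+1}\}$, which is either empty or a single shared vertex; so the connected arc $A_i$, whose endpoints both lie in the closed polygon-side half-plane $\{q_{I_j} \ge 0\}$, stays entirely on the polygon side of $\ell_j$. Combined with $I_i \subseteq P \subseteq \{q_{I_j} \ge 0\}$ this yields $\partial L_i \subseteq \{q_{I_j} \ge 0\}$, and since the affine function $q_{I_j}$ attains its minimum on the compact set $L_i$ on the boundary, we conclude $q_{I_j} \ge 0$ throughout $L_i$, with strict inequality off the at most one shared vertex.

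Applying this at our $y \in L_i \setminus I_i$: since $y \notin P$ the point $y$ cannot coincide with any vertex of $P$, so $q_{I_j}(y) > 0$ for every $j \ne i$, while $q_{I_i}(y) < 0$ strictly because $y$ lies on the open non-polygon side of $\ell_i$. Hence $p_1(y)$ is a product of one strictly negative and $m-1$ strictly positive factors, so $p_1(y) < 0$, contradicting $y \in (p_0, p_1)_{\ge 0}$. The main obstacle I anticipate will be formalising the lune decomposition of $K \setminus \intr P$ and confirming the boundary-sign analysis in the degenerate configurations where $\ell_j$ touches $L_i$ at a shared vertex; the two-point intersection property of a strictly convex curve with a line is the decisive tool that pins these cases down.
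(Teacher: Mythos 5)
Your proof is correct, but it takes a genuinely different route from the paper's. The paper fixes $x_0 \in \real^2 \setminus P$ and splits on the sign of $p_1(x_0)$: the cases $p_1(x_0)<0$ and $p_1(x_0)=0$ are immediate (in the latter, $p_0$ restricted to the line $\aff I$ of the vanishing factor is a strictly concave univariate polynomial vanishing at the endpoints of $I$, hence negative at $x_0 \notin I$), and in the remaining case $p_1(x_0)>0$ two edge inequalities are violated, the two edge lines meet at a point $y$, and a convex-combination argument on a triangle $\conv\{x_0,x_1,x_2\}$ containing $y$ in its interior forces $p_0(x_0)<0$ via strict concavity. You instead work inside $K=(p_0)_{\ge 0}$: after establishing that $K$ is a compact, strictly convex body whose boundary carries the vertices in their cyclic order, you decompose $K \setminus \intr P$ into lunes $L_i = K \cap \{q_{I_i}(P,\cdot) \le 0\}$ and use the fact that a line meets $\partial K$ in at most two points to show that on $L_i \setminus I_i$ exactly one factor of $p_1$ is negative. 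Both arguments are sound. Yours yields a sharper structural picture (the sign pattern of $p_1$ on the annulus $K\setminus \intr P$) at the cost of more global machinery -- compactness and strict convexity of $K$, the cyclic-order and Jordan-curve facts behind the lune decomposition, and the intermediate-value step showing each arc $A_i$ stays in $\{q_{I_j}(P,\cdot)\ge 0\}$ for $j \ne i$ -- all of which you correctly identify and which do go through; the paper's argument is shorter and purely local, never needing $K$ to be bounded a priori. One small economy you could borrow from the paper: your "lune'' analysis of the single negative factor on $L_i\setminus I_i$ is essentially the paper's Case~2 computation on $\aff I_i$, so the only真 extra content in your approach is the positivity of the other $m-1$ factors, which the paper avoids having to prove by tolerating several negative factors in its Case~3.
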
 
\begin{proof} 
	The inclusion $P \subseteq (p_0,p_1)_{\ge 0}$ is trivial. For proving the reverse inclusion we fix an arbitrary $x_0 \in \real^2 \setminus P.$ We distinguish the following three cases. 

	\emph{Case~1: $p_1(x_0)<0$.} Obviously $x_0 \not\in (p_0,p_1)_{\ge 0}.$ 

	\emph{Case~2: $p_1(x_0)=0$.} There exists $I \in \calF_1(P)$ with $q_F(P,x_0)=0.$ The polynomial $p_0(x)$ is strictly concave on $\aff I$, non-negative on $I$, and equal zero at the endpoints of $I.$ The latter easily implies that $p_0(x_0) < 0$. Hence $x_0 \not\in (p_0,p_1)_{\ge 0}.$  
	
	\emph{Case~3:  $p_1(x_0)>0$.} Then $q_{I_1}(P,x_0)<0$ and $q_{I_2}(P,x_0)<0$ for two distinct edges $I_1, \, I_2$ of $P$.  The edges $I_1$ and $I_2$ are not parallel. The intersection point $y$ of $\aff I_1$ and $\aff I_2$ satisfies $q_{I_1}(P,y)=q_{I_2}(P,y)=0$. We fix points $x_1$ and $x_2$ belonging to $I_1$ and $I_2$, respectively, and not coinciding with $y$. By construction, the point $y$ lies in the interior of the triangle $\conv \{x_0, x_1, x_2\}$, that is, $y= \lambda_0 \, x_0 + \lambda_1 \, x_1 + \lambda_2 \, x_2$ for some $\lambda_0, \, \lambda_1, \, \lambda_2  >0$ with $\lambda_0 + \lambda_1 + \lambda_2 =1$.  We show by contradiction, that $p_0(x_0) <0.$ Assume the contrary. Then, by the strict concavity of $p_0(x)$, we obtain 
	$$
		p_0(y) > \sum_{j=0}^2 \lambda_j \, p_0(x_j) \ge 0.
	$$

	The inequality $p_0(y) >0$ contradicts the conclusion made in Case~2 and applied to the point $y$ in place of $x_0$. Hence $p_0(x_0)<0$ and by this $x_0 \not \in (p_0,p_1)_{\ge 0}$. 
\end{proof} 

The following theorem implies Theorem~\ref{main:thm} for the case when $P$ is a $3$-polytope.

\begin{theorem} \label{main:thm:polyt} Let $d \in \{2,3\}$ and $P$ be a $d$-dimensional polytope  in $\real^d$. Let $(g_m(x))_{m=1}^{\infty}$ be a sequence of polynomials satisfying conditions $\calA(P),$ $\calI(P),$ and $\calC(P).$ Then there exists an algorithm that takes $P$ and constructs polynomials $p_1(x),\ldots,p_{d-1}(x) \in \real[x]$ such that $P=(p_0,\ldots,p_{d-1})_{\ge 0}$ and
\begin{equation}  \label{p0:def}
	p_0(x)=g_m(x).
\end{equation}
for some $m \in \natur.$ 
\end{theorem}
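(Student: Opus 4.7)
The plan is to treat the two dimensions separately: $d=2$ reduces to a direct application of Proposition~\ref{main:prop:polyg}, while $d=3$ carries the bulk of the work along the lines sketched in Section~\ref{sect:lyrics}. For $d=2$, taking any $m$ (say $m=1$) and setting $p_0(x) := g_m(x)$ and $p_1(x) := \prod_{I \in \calF_1(P)} q_I(P,x)$, the polynomial $p_0$ is strictly concave on $\real^2$ and vanishes on $\vx(P)$ by $\calC(P)$ and $\calI(P)$, so Proposition~\ref{main:prop:polyg} yields $P = (p_0, p_1)_{\ge 0}$.

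For $d=3$, I would set
\[
p_2(x) := \prod_{F \in \calF_2(P)} q_F(P,x), \qquad p_1(x) := \sum_{v \in \vx(P)} f_v(x)^{2k}\, b_{v,l}(x),
\]
where $f_v(x) := \prod_{F \in \calF_2(P):\,v \notin F} q_F(P,x)$ and $b_{v,l}$ is supplied by Proposition~\ref{b:prop}, and $p_0(x) := g_m(x)$. The parameters are fixed in stages: first $\rho > 0$ is chosen small enough for Proposition~\ref{U:prop}, Theorem~\ref{tight:interp}(\ref{AIC:impl:rho}), Proposition~\ref{b:prop}(\ref{approx:cones:part}), and Lemma~\ref{rho:lem}; then $\eps > 0$ so small that Lemma~\ref{sem-cont-f-minus} forces $\calF^-(P,x) \subseteq \calF_2(G,P)$ on the $\eps$-neighborhood of each face $G$; then $l$ large for Proposition~\ref{b:prop}(\ref{approx:cones:part}); and finally $m, k$ large. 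The forward inclusion $P \subseteq (p_0, p_1, p_2)_{\ge 0}$ is then immediate: $p_0 \ge 0$ on $P$ by concavity and vanishing on vertices; $p_2 \ge 0$ on $P$ since each $q_F \ge 0$ on $P$; and $b_{v,l} \ge 0$ on $P$ via Proposition~\ref{b:prop}(\ref{conv:cone:part}), using that $\sprod{x - v}{u_v(P)} \le 0$ for every $x \in P$.

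For the reverse inclusion, I would take $x_0 \in \real^3 \setminus P$ and use Proposition~\ref{U:prop} together with compactness of $P$ to choose $\delta > 0$ with $P + \uball^3(o,\delta) \subseteq \intr U(P,\eps,\rho)$. If $x_0$ has Euclidean distance from $P$ exceeding $\delta$, then by $\calA(P)$ choosing $m$ large forces $\{g_m \ge 0\} \subseteq P + \uball^3(o,\delta)$, so $p_0(x_0) < 0$. Otherwise $x_0 \in U_G(P,\eps,\rho) \setminus P$ for a proper face $G$. If $G$ is a facet $F$, one checks that $U_F \cap \aff F \subseteq \relint F \subseteq P$, so $x_0 \notin P$ forces $q_F(x_0) < 0$ while $q_{F'}(x_0) > 0$ for $F' \ne F$, yielding $p_2(x_0) < 0$. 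If $G$ is an edge $I$ with endpoints $v_1, v_2$, the choice of $\eps$ forces $\calF^-(P, x_0) = \calF_2(I, P)$, whence Proposition~\ref{b:prop}(\ref{08.05.30,10:45}) gives $b_{v_i, l}(x_0) \le 0$; both values being zero would force $x_0 \in (\cone(I{-}v_1){+}v_1) \cap (\cone(I{-}v_2){+}v_2) = I \subseteq P$, and a one-sided equality would put $x_0$ on $\aff I$ past some $v_i$, outside $v_i + S_\rho(P, v_i)$ and therefore outside $U_I$, so in fact $b_{v_1, l}(x_0), b_{v_2, l}(x_0) < 0$. If $G$ is a vertex $v$, either $x_0 \in v - \intr S_\rho(P,v)$, in which case Theorem~\ref{tight:interp}(\ref{AIC:impl:rho}) gives $p_0(x_0) < 0$, or $x_0 \in U_v'(P,\eps,\rho)$, in which case Proposition~\ref{b:prop}(\ref{approx:cones:part}) gives $b_{v, l}(x_0) < 0$.

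The main obstacle is the quantitative dominance argument needed to conclude $p_1(x_0) < 0$ in the edge and vertex sub-cases: the ``active'' summands $f_v^{2k} b_{v,l}$ indexed by vertices $v \in \vx(G)$ are strictly negative, but the remaining summands are of indefinite sign and must be shown negligibly small. By construction, $f_u$ vanishes on $\aff F$ for every facet $F$ with $u \notin F$, so for $u \notin \vx(G)$ the factor $f_u(x_0)$ vanishes to positive order as $x_0$ approaches $\relint G$, while $f_v(x_0)$ for $v \in \vx(G)$ stays bounded away from zero there. I would quantify this by {\L}ojasiewicz's Inequality (Theorem~\ref{Loj}) applied on the compact closure of $U(P, \eps, \rho)$, comparing $|f_u|$ with a suitable power of $|f_v|$, and then take $k$ large enough to enforce the desired sign. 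The algorithmic determination of $m$, $l$, and $k$ is obtained by Tarski--Seidenberg (Theorem~\ref{TarskiSeidenberg}) applied to the first-order formulas encoding the conditions above.
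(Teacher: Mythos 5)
Your construction is the same as the paper's in every structural respect: the same $p_0,p_1,p_2$ and $f_v$, the same decomposition of a neighborhood of $P$ into the regions $U_G(P,\eps,\rho)$, the same handling of facet regions via $p_2$, of the reflected cones $v-\intr S_\rho(P,v)$ via \eqref{cones:on:vert:cond}, of far-away points via $\calA(P)$, and the same Tarski--Seidenberg enumeration of the parameters. One minor imprecision: on $U_I$ the identity $\calF^-(P,x_0)=\calF_2(I,P)$ does not follow from Lemma~\ref{sem-cont-f-minus} and ``the choice of $\eps$'' alone, since no uniform $\eps$ controls $\calF^-$ near the endpoints of $I$; you need Lemma~\ref{rho:lem} (which you did list among the constraints on $\rho$) together with \eqref{eps0:choice}. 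The paper in fact sidesteps this identity and instead proves the lower bound $\max_{w\in\vx(I)}|f_w(x)|\ge\beta_I$ on $U_I$.

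The genuine gap is in the step you yourself flag as the main obstacle. On $U_G(P,\eps,\rho)$ with $G$ an edge or a vertex, $|f_v|$ for $v\in\vx(G)$ is bounded \emph{below} by a positive constant, so a {\L}ojasiewicz comparison of $|f_u|$ ($u\notin\vx(G)$) against a power of $|f_v|$ degenerates into the trivial statement that $|f_u|$ is bounded and gives no leverage. The quantity that degenerates on the active side as $x\to G$ is not $f_v(x)^{2k}$ but $|b_{v,l}(x)|$, which tends to $0$ on $G$ while $|b_{u,l}(x)|$ for $u\notin\vx(G)$ stays of order one; both the active and the inactive summands of $p_1$ vanish on $G$, and what must be compared is their \emph{rates}. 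Consequently no choice of $k$ applied to a bound of the form $|f_u|\le\lambda\,|f_v|^{1/n}$ can force $\bigl|\sum_{u\notin\vx(G)}f_u^{2k}b_{u,l}\bigr|\le\frac12\sum_{v\in\vx(G)}f_v^{2k}|b_{v,l}|$ uniformly near $\relint G$. The correct comparison, which the paper makes, is between $\max_{u\notin\vx(G)}|f_u|$ and $\min_{v\in\vx(G)}|b_{v,l}|$ on $U_G$: the zero set of the latter inside $U_G$ is exactly $G$ (by \eqref{08.04.22,13:53}, \eqref{08.04.22,13:52}, and \eqref{08.04.22,15:30}), on which the former vanishes, so Theorem~\ref{Loj} yields $\max_u f_u(x)^{2k_G}\le\gamma\,\min_v|b_{v,l}(x)|$ for some exponent $k_G$; combined with $|f_u|\le\beta/2$, $|f_v|\ge\beta$, and \eqref{alpha:def}, taking $k\ge k_G$ large enough then gives the required domination. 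With that substitution your argument closes and coincides with the paper's proof.
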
 
\begin{proof} 
The case $d=2$ follows directly from Theorem~\ref{tight:interp} and Proposition~\ref{main:prop:polyg}. We consider the case $d=3.$ First we show the existence of polynomials $p_0(x), \, p_1(x), \, p_2(x)$ satisfying $P=(p_0,p_1,p_2)_{\ge 0},$ and then we show that these polynomials are constructible. Obviously, there exists $\eps_0>0$ such that 
	\begin{equation} 
		\forall  \, F \in \calF_{d-1}(P) \ \forall \, G \in \calF(P) \  : \  F \cap G = \emptyset  \ \Longrightarrow \ \aff F \cap (G+\uball^d(o,\eps_0)) = \emptyset . \label{eps0:choice} \\
	\end{equation} 

	In view of Theorem~\ref{tight:interp}(\ref{AIC:impl:rho}), condition \eqref{cones:on:vert:cond} is fulfilled for all sufficiently small $\rho>0.$
	We also assume that $\rho$ is sufficiently small so that 
	\eqref{card:2:bound} in Lemma~\ref{rho:lem} is fulfilled.  For $v \in \vx(P)$ and $l \in \natur$ let the polynomials $b_{v,l}(x)$ be as in Proposition~\ref{b:prop}.
Fix $\alpha_l>0$ satisfying 
\begin{equation} \label{alpha:def} 
	|b_{w,l}(x)| \le \alpha_l \qquad \forall \, x \in P + \uball^3(o,\eps_0) \ \forall \, w \in \vx(P).
\end{equation}

We define 
	\begin{eqnarray} 
		p_0(x) & := & g_m(x), \nonumber \\
		p_{1}(x)  & := & \sum_{v \in \vx(P)} f_v(x)^{2k} b_{v,l}(x), \label{p1:def} \\
		p_2(x) & := & \prod_{F \in \calF_2(P)} q_F(P,x), \label{p2:def}
	\end{eqnarray}
	where 
	\begin{equation} \label{fv:def} 
		f_v(x) := \prod_{F \in \calF_2(P) \setminus \calF_2(v,P)}  q_F(P,x) 
	\end{equation}
	for every $v \in \vx(P)$ and the parameters $k, m \in \natur$ will be fixed later. It will be shown that for a sufficiently large $k$ and $m$ we have $P=(p_0,p_1,p_2)_{\ge 0}$. Let us first show that there exists $\eps \in (0,\eps_0]$ such for all sufficiently large $k \in \natur$ we have
	\begin{equation} \label{08.03.06,14:00}
		U(P,\eps,\rho) \cap (p_1,p_2)_{\ge 0} \subseteq P \cup \biggl( \bigcup_{v \in \vx(P)} \bigl( v-\intr S_\rho(P,v) \bigr) \biggr).
	\end{equation} 

	Let us consider an arbitrary $x \in U(P,\eps_0,\rho).$ 

	\emph{Case~1: $x \in U_P(P,\eps_0,\rho)=P$}. Clearly, $x$ belongs to the left and the right hand side of \eqref{08.03.06,14:00}.

	\emph{Case~2: $x \in U_F(P,\eps_0,\rho)$ for some $F \in \calF_2(P)$.} We have $p_2(x) \le 0$ with equality if and only if $x \in F.$ Consequently $U_F(P,\eps_0,\rho) \cap (p_2)_{\ge 0} \subseteq P.$ 
	
	\emph{Case~3: $x \in U_I(P,\eps_0,\rho)$ for some $I \in \calF_1(P)$.}  If $x \in I$ and $v \in \vx(P) \setminus \vx(I),$ then at least one of the two facets $F$ from $\calF_2(I,P)$ satisfies $v \not\in F,$ which yields $f_v(x)=0.$ Consequently
	\begin{equation} \label{08.03.12,11:17}
		\max_{v \in \vx(P) \setminus \vx(I)} |f_v(x)| = 0 \qquad \forall \, x \in I
	\end{equation} 
	 We show that there exists $\beta_I> 0$ is such that 
	\begin{equation} \label{beta:I:bd}
		\beta_I \le \max_{w \in \vx(I)} |f_w(x)| \qquad \forall \, x \in U_I(P,\eps_0,\rho).
	\end{equation}
	Choose $x \in U_I(P,\eps_0,\rho).$ If $x \in \vx(I),$ then $f_w(x) > 0$ for $w=x.$ If $x \in \relint I,$ then $f_w(x)>0$ for every $w \in \vx(I).$ Now assume $x \in U_I(P,\eps_0,\rho) \setminus I.$ We fix arbitrary $w \in \vx(I)$ and $F \in \calF_2(P) \setminus \calF_2(w,P)$.  Consider the subcase $F \cap I= \emptyset.$ By the definition of $U_I(P,\eps_0,\rho)$, we have $x \in I + \uball^3(o,\eps_0),$ and by \eqref{eps0:choice}, we obtain $q_F(P,x) \ne 0.$ Consider the subcase $F \cap I \ne \emptyset.$ We denote by $v$ the endpoint of $I$ distinct from $w.$ Then $F \in \calF_2(v,P)$. Let us show that $q_F(P,x) \ne 0$ by contradiction. Assume the contrary, i.e., $q_F(P,x)=0.$ Then $F \in \calF^-(P,x).$ Furthermore, $\calF_2(I,P) \subseteq \calF^-(P,x)$, by the definition of $U_I(P,\eps_0,\rho).$ Taking into account the above relations together with $\calF_2(I,P)\subseteq \calF_2(v,P)$ and $F \in \calF_2(v,P)$ we deduce
	$\calF_2(I,P) \cup \{F \} \subseteq \calF^-(P,x) \cap \calF_2(v,P).$ Hence $|\calF^-(P,x) \cap \calF_2(v,P)| \ge 3,$ which implies that  \eqref{card:2:bound} cannot be fulfilled, a contradiction.  Summarizing we get 
	$$\max_{w \in \vx(I)} |f_w(x)|>0 \qquad \forall \, x \in U_I(P,\eps_0,\rho),$$
	which yields the existence of $\beta_I$ satisfying \eqref{beta:I:bd}.	By Proposition~\ref{b:prop}(\ref{08.05.30,10:45}) 
	\begin{equation} 
		b_{w,l}(x)  =  0  \qquad \forall \, w \in \vx(I) \ \forall \, x \in I. \label{08.04.22,13:53}
	\end{equation}

	If $x \in \aff I \setminus I,$ one has $x \not\in S_\rho(P,w)$ for some vertex of $w \in \vx(I).$  Consequently $U_I(P,\eps_0,\rho) \cap \aff I = I$.  By Proposition~\ref{b:prop}(\ref{08.05.30,10:45})  we get 
	\begin{equation} 
		b_{w,l}(x)  <  0   \qquad \forall \, w \in \vx(I) \ \forall \, x \in U_I(P,\eps_0,\rho) \setminus I, \label{08.04.22,13:52} 
	\end{equation} 

	We also notice that $\max_{v \in \vx(P) \setminus \vx(I)}{ |f_v(x)|}$ and $b_{w,l}(x)$ are semi-algebraic functions and $U_I(P,\eps_0,\rho)$ is a semi-algebraic set. Hence, taking into account \eqref{08.03.12,11:17}, \eqref{08.04.22,13:53} and \eqref{08.04.22,13:52} and applying Theorem~\ref{Loj},  we obtain  the existence of $k_I \in \natur$ and $\gamma_I>0$ such that
	\begin{equation} \label{08.03.12,11:21}
		\max_{v \in \vx(P) \setminus \vx(I)} f_v(x)^{2k_I}	\le \gamma_I \min_{w \in \vx(I)} |b_{w,l}(x)|
	\end{equation} 
	for every $x \in U_I(P,\eps_0,\rho)$.
	In view of \eqref{08.03.12,11:17}, we can choose $\eps_I \in (0,\eps_0]$ such that for every $x \in U_I(\eps_I,\rho)$ one has 
	\begin{equation} \label{eps:I:def}
		\max_{v \in \vx(P) \setminus \vx(I)} |f_v(x)| \le \frac{\beta_I}{2}.		
	\end{equation}
	
	We also assume that $k$ is large enough so that the inequality 
	\begin{equation} \label{k:I:bound}
		\alpha_l \, |\vx(P)| \, \gamma_I \, 2^{-2k}  \, \left(\frac{2}{\beta_I}\right)^{2k_I}  \le \frac{1}{2} 
	\end{equation}
	is fulfilled.

	Then for every $x \in U_I(P,\rho,\eps_I)$ and every $k \in \natur$ with $k \ge k_I$ we obtain 
	\begin{eqnarray*} 
		& & \biggl| \sum\limits_{v \in \vx(P) \setminus \vx(I)} f_v(x)^{2k} b_{v,l}(x) \biggr|  \\ & \stackrel{\eqref{alpha:def}}{\le} & \alpha_l \sum\limits_{v \in \vx(P) \setminus \vx(I)} f_v(x)^{2k} \\ & \le & \alpha_l \, |\vx(P)| \max\limits_{v \in \vx(P) \setminus \vx(I)} f_v(x)^{2k} 
	 \\ 
	& \stackrel{\eqref{08.03.12,11:21}}{\le} & \alpha_l \, |\vx(P)| \, \gamma_I 	\min\limits_{w \in \vx(I)} |b_{w,l}(x)| \max\limits_{v \in \vx(P) \setminus \vx(I)} f_v(x)^{2(k-k_I)} \\    
	& \stackrel{\eqref{eps:I:def}}{\le} & \alpha_l \, |\vx(P)| \, \gamma_I \,	\left(\frac{\beta_I}{2}\right)^{2(k-k_I)}\,  \min\limits_{w \in \vx(I)} |b_{w,l}(x)|  \\
	& \stackrel{\eqref{beta:I:bd}}{\le} & \alpha_l \, |\vx(P)| \, \gamma_I \, 2^{-2k}  \, \left(\frac{2}{\beta_I}\right)^{2k_I}  \min_{w \in \vx(I)} |b_{w,l}(x)| \max_{w \in \vx(I)} f_w(x)^{2k}  \\
	& \stackrel{\eqref{k:I:bound}}{\le} & \frac{1}{2} \min_{w \in \vx(I)} |b_{w,l}(x)| \max_{w \in \vx(I)} f_w(x)^{2k}  \\
	& \le & \frac{1}{2} \, \min_{w \in \vx(I)} |b_{w,l}(x)| \sum_{w \in \vx(I)} f_w(x)^{2k}   \\ 
	& \le &  \frac{1}{2} \, \sum\limits_{w \in \vx(I)} f_w(x)^{2k} \, |b_{w,l}(x)|.
	\end{eqnarray*} 

	Consequently, for $x$ and $k$ as above, we get
	\begin{eqnarray*}
		p_1(x) & \le & \sum_{w \in \vx(I)} f_w(x)^{2k} b_{w,l}(x) + \biggl| \sum\limits_{v \in \vx(P) \setminus \vx(I)} f_v(x)^{2k} b_{v,l}(x) \biggr| \\
			     & \le & \sum_{w \in \vx(I)} f_w(x)^{2k} b_{w,l}(x) + \frac{1}{2} \, \sum\limits_{w \in \vx(I)} f_w(x)^{2k} \, |b_{w,l}(x)| \\
			    & \stackrel{\eqref{08.04.22,13:52}}{=} & \frac{1}{2} \, \sum\limits_{w \in \vx(I)} f_w(x)^{2k} \, b_{w,l}(x) \stackrel{\eqref{08.04.22,13:52}}{\le} 0
	\end{eqnarray*}
	with equality $p_1(x)=0$ if and only if $x \in I.$ 

	\emph{Case~4: $x \in U'_v(P,\eps_0,\rho)$ for some $ v \in \vx(P)$.} By Proposition~\ref{b:prop}(\ref{approx:cones:part}) and the definition of $U'_v(P,\eps_0,\rho)$ we have
	\begin{equation} \label{08.04.22,15:30}
		b_{v,l}(x) < 0 \qquad \forall \, x \in U'_v(P,\eps_0,\rho) \setminus \{v\}.
	\end{equation} 
	In view of \eqref{eps0:choice}, the definition of $f_v(x)$, and the inclusion $U_v'(P,\eps_0,\rho) \subseteq \uball^3(v,\eps_0)$, there exists $\beta_v>0$ such that 
	\begin{equation} \label{beta:v:def}
		f_v(x) \ge \beta_v \qquad \forall \, x \in U'_v(P,\eps_0,\rho). 
	\end{equation} 
	On the other hand
	\begin{equation} \label{08.03.11,17:37}
		\max_{w \in \vx(P) \setminus \{v\} } |f_w(v)| =0.
	\end{equation} 
	Notice that $\max_{w \in \vx(P) \setminus \{v\} } |f_w(x)|$ and $b_{v,l}(x)$ are semi-algebraic function, and $U'_v(P,\eps_0,\rho)$ is a semi-algebraic set. Thus, taking into account \eqref{08.04.22,15:30} and \eqref{08.03.11,17:37} and applying Theorem~\ref{Loj} we find $\gamma_v>0$ and $k_v \in \natur $ such that
	\begin{equation} \label{08.03.07,17:34}
	\max_{w \in \vx(P) \setminus \{v\} } |f_w(x)|^{2k_v} \le \gamma_v \, |b_{v,l}(x)|
	\end{equation}
	for every $x \in U'_v(P,\eps_0,\rho)$. In view of \eqref{08.03.11,17:37}, we can choose $ \eps_v \in (0,\eps_0]$ such that
	\begin{equation} \label{08.03.12,17:24}
		\max_{w \in \vx(P) \setminus \{v\} } |f_w(x)| \le \frac{\beta_v}{2}
	\end{equation} 
	for every $x \in U'_v(P,\eps_v,\rho)$.
	We assume that $k$ is large enough so that 
	\begin{equation} \label{k:v:bound}
		\alpha_l \,  |\vx(P)| \, \gamma_v \,  \, \left( \frac{\beta_v}{2} \right)^{-2k_v} \, 2^{-2k} \le \frac{1}{2}.
	\end{equation} 
	Then for every $x \in U'_v(P,\eps_v,\rho)$ and $k \ge k_v$ as above we obtain 
	\begin{eqnarray*} 
		\biggl|\sum_{w \in \vx(P) \setminus \{v\}} f_w(x)^{2k} b_{w,l}(x) \biggr| 
		& \stackrel{\eqref{alpha:def}}{\le} & \alpha_l \, |\vx(P)| \, \max\limits_{w \in \vx(P) \setminus \{v\} } |f_w(x)|^{2k}  \\
		& \stackrel{\eqref{08.03.07,17:34}}{\le} & \alpha_l \,  |\vx(P)| \, \gamma_v \, |b_{v,l}(x)| \, \max\limits_{w \in \vx(P) \setminus \{v\} } |f_w(x)|^{2(k-k_v)} \\ 
		& \stackrel{\eqref{08.03.12,17:24}}{\le} & \alpha_l \,  |\vx(P)| \, \gamma_v \, \left( \frac{\beta_v}{2} \right)^{2(k-k_v)} \, |b_{v,l}(x)|  \\ 
		& \stackrel{\eqref{beta:v:def}}{\le} & \alpha_l \,  |\vx(P)| \, \gamma_v  \, \left( \frac{\beta_v}{2} \right)^{2(k-k_v)} \, (\beta_v)^{-2k} f_v(x)^{2k} |b_{v,l}(x)|  \\
		& = & \alpha_l \,  |\vx(P)| \, \gamma_v \,  \, \left( \frac{\beta_v}{2} \right)^{-2k_v} \, 2^{-2k} f_v(x)^{2k} |b_{v,l}(x)| \\
		& \stackrel{\eqref{k:v:bound}}{\le} & \frac{1}{2} f_v(x)^{2k} |b_{v,l}(x)|.
	\end{eqnarray*} 
	Hence, taking into account \eqref{08.04.22,15:30} and \eqref{beta:v:def},  we obtain that  for $k$ satisfying \eqref{k:v:bound} and all $x \in U'_v(P,\eps_v,\rho)$ we have $p_1(x) \le 0$ with equality if and only if $x=v.$ 
	Consequently, for $x$ and $k \ge k_v$ as above we obtain
	\begin{align*}
		p_1(x) & \le  f_v(x)^{2k} b_{v,l}(x) + \biggl|\sum_{w \in \vx(P) \setminus \{v\}} f_w(x)^{2k} b_{w,l}(x) \biggr| \le f_v(x)^{2k}  b_{v,l}(x) + \frac{1}{2} f_v(x)^{2k} |b_{v,l}(x)| \\
			& \stackrel{\eqref{08.04.22,15:30}}{=} \frac{1}{2} f_v(x)^{2k} b_{v,l}(x) \stackrel{\eqref{08.04.22,15:30}}{ \le} 0
	\end{align*}
	with equality $p_1(x)=0$ if and ony if $x=v.$

	\emph{Case~5: $x \in U_v(P,\eps_0,\rho) \setminus U_v'(P,\eps_0,\rho)$ for some $v \in \vx(P).$} By the definition of $U_v(P,\eps_0,\rho)$ and $U_v'(P,\eps_0,\rho)$ we easily see that $x \in v - \intr S_\rho(P,v).$ Thus, $U_v(P,\eps_0,\rho) \setminus U_v'(P,\eps_0,\rho) \subseteq v - \intr S_\rho(P,v)$. 

	By means of the arguments given in the above five cases we verified that \eqref{08.03.06,14:00} holds if $k \ge k_v$, $k \ge k_I$, $\eps \le \eps_v$, $\eps \le \eps_I$ for all $v \in \vx(P)$ and $I \in \calF_1(P)$ and the inequalities \eqref{k:I:bound} and \eqref{k:v:bound} are fulfilled. By Proposition~\ref{U:prop}, there exists $\delta>0$  such that $P+\uball^3(o,\delta) \subseteq U(P,\eps,\rho)$. By condition $\calA(P)$, we can choose $m \in \natur$ such that $(p_0)_{\ge 0} \subseteq P+\uball^3(o,\delta).$ Thus, for $m$ as above we obtain
	\begin{eqnarray*}
		(p_0,p_1,p_2)_{\ge 0} & = & (p_0,p_1,p_2)_{\ge 0} \cap (P+\uball^3(o,\delta)) \subseteq (p_0,p_1,p_2)_{\ge 0} \cap U(P,\eps,\rho)  \\
		& \stackrel{\eqref{08.03.06,14:00}}{\subseteq} & (p_0)_{\ge 0} \cap \biggl( P \cup \bigcup_{v \in \vx(P)} \bigl( v-\intr S_\rho(P,v) \bigr) \biggr) \\
		& = & \left(  (p_0)_{\ge 0} \cap P \right) \cup \biggl( (p_0)_{\ge 0} \cap \bigcup_{v \in \vx(P)} \bigl( v-\intr S_\rho(P,v)  \bigr) \biggr) \\
	   & \stackrel{\eqref{cones:on:vert:cond}}{=} & P \cup \vx(P) = P. 
	\end{eqnarray*}
	Thus $(p_0,p_1,p_2)_{\ge 0} = P$.

	Now let us show the existence of an algorithm constructing $p_1(x), \, p_2(x), \, p_3(x)$ as above. The constructibility of $q_2(x)$ is obvious. Below we show how appropriate polynomials $p_0(x)$ and $p_1(x)$ can be determined. The formulas for $p_0(x)$ and $p_1(x)$ involve the parameters $l, m, k \in \natur$. It is not difficult to construct the sequences $(l_j)_{j=1}^{\infty},$ $(m_j)_{j=1}^{\infty},$ $(k_j)_{j=1}^{\infty}$ such that $$\natur^3 = \setcond{(l_j,m_j,k_j)}{j \in \natur}.$$ We proceed as follows.

	\begin{enumerate}[1:]
		\item Set $j:=1.$
		\item \label{08.06.10,17:25} Set $l:=l_j,$ $m:=m_j,$ $k:=k_j$.
		\item Determine $p_0(x)$ and $p_1(x)$ by \eqref{p0:def} and \eqref{p1:def}, respectively.
		\item \label{08.06.11,09:41} If $P \ne (p_0,p_1,p_2)_{\ge 0}$, set $j:=j+1$ and go to Step~\ref{08.06.10,17:25}.
		\item Return $p_0(x)$ and $p_1(x)$.
	\end{enumerate}

We remark that at Step~\ref{08.06.11,09:41}  the comparison of $P$ and $(p_0,p_1,p_2)_{\ge 0}$ can be performed algorithmically, which follows directly from Theorem~\ref{TarskiSeidenberg}.
\end{proof} 

\subsection{The case of unbounded polyhedra} \label{sect:proof:unbounded}

\begin{proof}[Proof of Theorem~\ref{main:thm}] 
The case when $P$ is bounded follows directly from Theorem~\ref{main:thm:polyt}. Let us consider the case when $P$ is unbounded. Every polyhedron $P$ can be represented as a sum $P=Q+L,$ where $L$ is an affine space and $Q$ is a line-free polyhedron such that $\aff Q$ is orthogonal to $L$; see \eqref{polyt:decomp}. If one can construct polynomials $p_0(x),\ldots,p_{d-1}(x) \in \real[x]$ with $Q= \setcond{x \in \aff Q}{p_0(x) \ge 0,\ldots,p_{d-1}(x) \ge 0},$ then $$P= \setcond{x \in \real^d}{p_0( x \ortproj \aff Q) \ge 0,\ldots, p_{d-1}( x \ortproj \aff Q)},$$ where $x \ortproj \aff Q$ is the orthogonal projection of $x$ onto $\aff Q.$ Thus, also $P$ can be represented by $d$ polynomial inequalities. Consequently, we can restrict ourselves to the case when $P$ is a $d$-dimensional line-free polyhedron in $\real^d$.  

From now on, we replace $P$ by an isometric copy of $P$ in $\real^{d+1}$ and also assume that $o \not\in \aff P$. Then $\homog(P)$ is a $(d+1)$-dimensional pointed polyhedral cone.  Since $\homog(P)$ is pointed, one can determine a hyperplane $H'$ in $\real^{n+1}$ with $o \notin H'$ such that $P':= \homog(P) \cap H'$ is bounded and $\homog(P) = \cone (P').$ We apply Theorem~\ref{tight:interp}(\ref{exists:AICH}) to the polytope $P'$ and construct a sequence of polynomials $(g_l(x))_{l=1}^{\infty}$ satisfying conditions $\calA(P'),$ $\calI(P'),$ $\calC(P'),$ and $\calH(P').$ By Theorem~\ref{main:thm:polyt} one can construct polynomials $f_1(x),\ldots,f_{d-1}(x)$ satisfying 
$$P' = \setcond{x \in H'}{f_0(x)\ge 0, \ldots, f_{d-1}(x) \ge 0}$$
and $f_0(x)=g_l(x)$ for some $l \in \natur$. We choose an affine function $f(x)$ on $H'$ such that $\setcond{x \in H'}{f(x)=0}$ is a $(d-1)$-dimensional affine subspace of $H'$ and such that $f(x)$ is strictly positive on the set $\setcond{x \in H'}{p_0(x) \ge 0}$; the above choice is possible since $\setcond{x \in H'}{p_0(x) \ge 0}$ is bounded. For $j=1,\ldots,d-1$ let us define $k_j:=1$ if $f_j(x)$ has odd degree and $k_j:=0$, otherwise. We set $p_j(x):=f_j(x) \, f(x)^{k_j}$ for $j=1,\ldots,d-1$ and $p_0(x):=f_0(x).$ By construction, the polynomials $p_0(x),\ldots,p_{d-1}(x)$ have even degrees. We have $P' = \setcond{x \in H'}{p_0(x) \ge 0,\ldots, p_{d-1}(x)\ge 0}.$ In fact, if $f(x) >0$, for every $j=0,\ldots,d-1$ the inequality $f_j(x) \ge 0$ is equivalent to the inequality $p_j(x) \ge 0$ and otherwise $p_0(x) <0$, by the choice of $f(x)$ and the definition of $p_0(x)$. For $j=0,\ldots,d-1$ let $\Tilde{p}_j(x)$ be the homogenization of $p_j(x)|_{H'}.$ Let $u' \in \real^d \setminus \{o\}$ denote a normal of $H'.$ By the definition of the homogeneous continuation and the evenness of the degrees of $p_0(x), \ldots, p_{d-1}(x)$, we have 
$$
	\setcond{x \in \real^{d+1}}{\Tilde{p}_0(x) \ge 0,\ldots, \Tilde{p}_{d-1}(x) \ge 0, \ \sprod{x}{u'} \ne 0} \cup \{o \} = \homog(P) \cup ( - \homog(P) ).
$$
Furthermore, since $g_l(x)$ satisfies $\calH(P),$ if $\sprod{x}{u'}=0,$ then $\Tilde{p}_0(x) \le 0$ with equality if and only if on $x=o$. Hence 
$$
	\setcond{x \in \real^{d+1} }{\Tilde{p}_0(x) \ge 0,\ldots, \Tilde{p}_{d-1}(x) \ge 0}  = \homog(P) \cup ( - \homog(P) ),
$$
which implies 
\begin{align*}
	& \setcond{x \in \aff P}{\Tilde{p}_0(x) \ge 0,\ldots, \Tilde{p}_{d-1}(x) \ge 0} = \bigl(\homog(P) \cup ( - \homog(P) )\bigr) \cap (\aff P) \\  = & \bigl( \cone(P) \cup \rec(P) \cup (-\cone(P)) \cup (-\rec(P)) \bigr) \cap (\aff P)= P.
\end{align*}
and we are done.
\end{proof} 

We wish to give another proof of Theorem~\ref{main:thm}. First we formulate a modified version of {\L}ojasiewicz's Inequality. 

\begin{lemma} \label{loj:mod}
	Let $A$ be a bounded and closed semi-algebraic set in $\real^d.$ Let $f(x)$ and $g(x)$ be continuous, semi-algebraic functions on $A$ satisfying $$\setcond{x \in A}{f(x) =0} \subseteq \setcond{x \in A}{g(x) = 0}.$$
	Then there exists $n' \in \natur$ such that for every $n \in \natur$ with $n \ge n'$ there is a constant $\lambda > 0$ satisfying
	\begin{equation*}
		|g(x)|^n \le \lambda \, |f(x)| 
	\end{equation*}
	for every $x \in A.$  
\end{lemma}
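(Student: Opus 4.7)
The plan is to reduce the statement to the original Łojasiewicz Inequality (Theorem~\ref{Loj}) by a straightforward boundedness argument. Specifically, I would first apply Theorem~\ref{Loj} to the pair $f,g$ on $A$ to obtain some $n' \in \natur$ and some constant $\lambda' > 0$ with $|g(x)|^{n'} \le \lambda' |f(x)|$ for every $x \in A$. This is the threshold exponent for which I will establish the lemma.

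Next, I would exploit the compactness of $A$ together with the continuity of $g$: since $A$ is bounded and closed and $g$ is continuous on $A$, the function $|g|$ attains a maximum on $A$, so one may fix $M := \max\setcond{|g(x)|}{x \in A} < \infty$. Setting $M' := \max\{M,1\}$ (to avoid trivial issues should $M < 1$), for any integer $n \ge n'$ and any $x \in A$ one has the chain
\begin{equation*}
|g(x)|^n = |g(x)|^{n'} \cdot |g(x)|^{n-n'} \le \lambda' \, |f(x)| \cdot (M')^{n-n'}.
\end{equation*}
Thus the choice $\lambda := \lambda' (M')^{n-n'}$ (which depends on $n$ but is positive and finite) yields $|g(x)|^n \le \lambda \, |f(x)|$ for every $x \in A$, as required.

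There is no genuine obstacle here; the only mild care required is handling the degenerate case in which $g$ is identically zero on $A$ (in which the conclusion is vacuous, with any $\lambda$ working) and the possibility that $M < 1$, which is why I replace $M$ by $M' = \max\{M,1\}$ before exponentiating. Since $f$ and $g$ are continuous and semi-algebraic and $A$ is a compact semi-algebraic set, all the hypotheses required to invoke Theorem~\ref{Loj} are already in place, so the lemma follows immediately.
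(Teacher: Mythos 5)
Your proof is correct and follows essentially the same route as the paper: apply Theorem~\ref{Loj} to get the threshold exponent $n'$ and constant $\lambda'$, then use compactness of $A$ and continuity of $g$ to bound $|g|$ by a constant and absorb the extra factor $|g(x)|^{n-n'}$ into $\lambda$. The only difference is your harmless extra care with $M'=\max\{M,1\}$, which the paper sidesteps by simply taking any upper bound $\mu\ge 0$ of $|g|$ and setting $\lambda:=\mu^{n-n'}\lambda'$.
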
 
\begin{proof}
	By Theorem~\ref{Loj} we can fix $n' \in \natur$ and $\lambda'>0$ such that $|g(x)|^{n'} \le \lambda' \, |f(x)|$ for every $x \in A.$ Let $\mu \ge 0$ be an upper bound of $|g(x)|$ on $A.$ Then for every $n \in \natur$ with $n \ge n'$ we have $|g(x)|^{n} \le \mu^{n-n'} \, \lambda' \, |f(x)|,$ so that we may set $\lambda:= \mu^{n-n'} \, \lambda'.$ 
\end{proof}

The following proposition (which is also interesting on itself) can be used to give another proof of Theorem~\ref{main:thm}.

\begin{proposition} \label{bd:to:unbd} Let $d \in \natur$ and $d \ge 2.$ Assume that there exists an algorithm that takes an arbitrary $n$-polytope $P$ in $\real^d$ and constructs polynomials $p_0(x),\ldots,p_{n-1}(x) \in \real[x]$ satisfying $P=\setcond{x \in \aff P}{p_0(x) \ge 0,\ldots,p_{n-1}(x) \ge 0}$ and $p_0(x) >0$ for all $x \in P \setminus \vx(P).$ Then there exists an algorithm that takes an arbitrary $d$-polyhedron $P$ in $\real^d$ and constructs $f_0(x),\ldots,f_{d-1}(x) \in \real[x]$ satisfying $P=(f_0,\ldots,f_{d-1})_{\ge 0}.$ 
\end{proposition}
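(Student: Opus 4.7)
The plan is to follow the strategy of the unbounded case in the proof of Theorem~\ref{main:thm}, substituting the hypothesis of the proposition in place of Theorem~\ref{main:thm:polyt} and compensating, via Lemma~\ref{loj:mod}, for the absence of the homogeneity condition $\calH(P')$ previously supplied by Theorem~\ref{tight:interp}(\ref{exists:AICH}). As in that proof, I would first reduce through the decomposition $P = Q + L$ to the case of a line-free $d$-polyhedron, isometrically place $P$ in $\real^{d+1}$ with $o \notin \aff P$, and additionally arrange that $\aff P \cap (-\aff P) = \emptyset$ (for instance by putting $\aff P$ in $\{x_{d+1} = 1\}$). Then $\homog(P)$ is a $(d+1)$-dimensional pointed polyhedral cone, and one may algorithmically construct a hyperplane $H'$ with normal $u' \in \real^{d+1} \setminus \{o\}$, $o \notin H'$, such that $P' := \homog(P) \cap H'$ is a bounded $d$-polytope and $\homog(P) = \cone(P')$. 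Identifying $H'$ with $\real^d$ via an affine isometry, the hypothesis applied to $P'$ produces polynomials $p_0, \ldots, p_{d-1}$ with $P' = \setcond{y \in H'}{p_0(y) \ge 0, \ldots, p_{d-1}(y) \ge 0}$ and, crucially, $p_0 > 0$ on $P' \setminus \vx(P')$.

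The central new step is a Lojasiewicz-based modification of $p_0$. Set $g(y) := \prod_{v \in \vx(P')} \|y - v\|^2$, a nonnegative polynomial on $\real^d$ whose zero set equals $\vx(P')$ and hence coincides with the zero set of $p_0$ on $P'$. By Lemma~\ref{loj:mod} applied with $f := p_0$, $g$ as above, and $A := P'$, for every sufficiently large $n \in \natur$ there exists $\lambda > 0$ with $g(y)^n \le \lambda p_0(y)$ on $P'$. Pick such $n$ additionally satisfying $2n |\vx(P')| > \deg p_0$ and define
$$\hat{p}_0(y) := 2 \lambda p_0(y) - g(y)^n.$$
Then $\hat{p}_0 \ge \lambda p_0$ on $P'$, so $\hat{p}_0 \ge 0$ on $P'$ with zero set exactly $\vx(P')$; and $\hat{p}_0 \le 2 \lambda p_0$ everywhere on $H'$, so $\hat{p}_0 < 0$ wherever $p_0 < 0$. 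Consequently the modified system $\hat{p}_0, p_1, \ldots, p_{d-1}$ still represents $P'$ in $H'$. Because $\deg g^n > \deg p_0$, the top-degree homogeneous part of $\hat{p}_0$ equals $-\|y\|^{2n |\vx(P')|}$, which is negative definite and of even degree.

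The remainder of the proof duplicates the end of the proof of Theorem~\ref{main:thm}: I would multiply each $p_j$ with $j \ge 1$ by $f^{k_j}$ for a suitable affine $f$ positive on $P'$ and $k_j \in \{0,1\}$ so that all degrees become even, form the homogeneous continuations $\Tilde{\hat{p}}_0, \Tilde{p}_1, \ldots, \Tilde{p}_{d-1}$ in $\real^{d+1}$, and verify that their common nonnegativity set equals $\homog(P) \cup (-\homog(P))$. On $\{\sprod{x}{u'} \ne 0\}$, the evenness of the degrees together with the representation of $P'$ gives $\cone(P') \cup (-\cone(P')) = \homog(P) \cup (-\homog(P))$; on the linear hyperplane $V := \{\sprod{x}{u'} = 0\}$, the restriction of $\Tilde{\hat{p}}_0$ equals the negative definite top-degree part of $\hat{p}_0$, so only $o$ survives. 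Intersecting the resulting set with $\aff P$ then yields $P$, since $(-\homog(P)) \cap \aff P = \emptyset$ by the arrangement $\aff P \cap (-\aff P) = \emptyset$; pulling back through the isometric identification of $\aff P$ with $\real^d$ gives the required polynomials $f_0, \ldots, f_{d-1} \in \real[x]$.

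The main obstacle is the construction of $\hat{p}_0$: to make its top-degree part negative definite while preserving its zero set on $P'$ one needs both the strict positivity $p_0 > 0$ on $P' \setminus \vx(P')$ (so that the Lojasiewicz comparison function $g$ has exactly the same zero set as $p_0$ on $P'$) and the flexibility in the exponent $n$ supplied by Lemma~\ref{loj:mod} (so that $\deg g^n$ can be forced to exceed $\deg p_0$). Constructibility of the whole procedure follows by combining the algorithmic hypothesis, the constructive content of Lemma~\ref{loj:mod} — essentially a Tarski–Seidenberg search for suitable $n$ and $\lambda$ — and a final Tarski–Seidenberg search over the remaining integer parameters, as in the proof of Theorem~\ref{main:thm:polyt}.
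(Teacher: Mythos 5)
Your proposal is correct and follows essentially the same route as the paper's proof: reduce to a line-free polyhedron, pass to $P'=\homog(P)\cap H'$, and use Lemma~\ref{loj:mod} to subtract a sufficiently high power of a polynomial vanishing exactly on $\vx(P')$ from $p_0$, so that after homogenization the system cuts out $\homog(P)\cup(-\homog(P))$ (the paper works directly with the homogeneous $\prod_{v}\bigl(\|x\|^2\|v\|^2-\sprod{x}{v}^2\bigr)$ in $\real^{d+1}$ and corrects after homogenizing, where you use $\prod_{v}\|y-v\|^2$ on $H'$ and homogenize afterwards, but this is cosmetic). One small correction: the auxiliary affine function $f$ must be chosen positive on all of $\setcond{y\in H'}{\hat{p}_0(y)\ge 0}$ --- a bounded set thanks to the negative definite leading form of $\hat{p}_0$ --- and not merely on $P'$, since otherwise a point with $\hat{p}_0\ge 0$, $p_j<0$, $f<0$, and $k_j=1$ could slip into the representation.
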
 
\begin{proof} Consider an arbitrary unbounded $d$-polyhedron $P$ in $\real^d.$  Using the same arguments as in the beginning of the proof of Theorem~\ref{main:thm} we may restrict ourselves to the case when $P$ is line-free. 

The set  $\homog(P)$ is a $(d+1)$-dimensionial pointed polyhedral cone.  Since $\homog(P)$ is pointed, one can determine a hyperplane $H'$ in $\real^{n+1}$ such that $P':= \homog(P) \cap H'$ is bounded and $\homog(P) = \cone (P').$ Let $u' \in \real^{d+1} \setminus \{o\}$ be the normal of $H'$ with $H' = \setcond{x \in \real^{d+1}}{\sprod{x}{u'} = 1}.$ Let us consider polynomials $p_0(x),\ldots,p_{d-1}(x) \in \real[x]$ such that $$P= \setcond{x \in \aff P'}{p_0(x) \ge 0,\ldots,p_{d-1}(x) \ge 0}$$ 
	and $p_0(x) > 0$ for every $x \in P \setminus \vx(P).$ Without loss of generality we may assume that $p_0(x),\ldots,p_{d-1}(x)$ are homogeneous. We define
	$$
		f(x) := \prod_{v \in \vx(P')} \left( \|x\|^2 \|v\|^2- \sprod{x}{ v }^2 \right).
	$$
	By Lemma~\ref{loj:mod} applied to $f(x)$ and $p_0(x)$ restricted to $P',$ there exist $\lambda >0$ and $l \in \natur$ satisfying $l \, \deg f(x) > \deg p_0(x)$ and $\lambda \, p_0(x) - f(x)^l \ge 0$ for every $x \in P'$ (where $\deg$ stands for degree).

	Define $f_0(x) := \lambda \, p_0(x) \, \sprod{x}{u'}^{k_0} -  f(x)^l$ where $k_0 \in \natur$ is chosen in such a way that $f_0$ is homogeneous, i.e., $k_0$ is determined from the equality 
	\begin{equation} \label{08.06.19,09:47}
		k_0 + \deg p_0(x) = l \, \deg f(x).
	\end{equation}
	We also set $f_i(x) := p_i(x) \sprod{x}{u'}^{k_i}$ for $i=1,\ldots,d-1$, where $k_1,\ldots,k_{d-1} \in \{1,2 \}$ are chosen in such a  way that $f_1(x),\ldots,f_{d-1}(x)$ have even degrees.   By construction, the polynomials $f_0(x),\ldots, f_{d-1}(x)$ are homogeneous, have even degrees, and satisfy
	\begin{equation*}
		\homog(P) \cup (-\homog(P)) = \setcond{x \in \real^{d+1}}{f_0(x) \ge 0,\ldots, f_{d-1}(x) \ge 0} 
	\end{equation*} 
Hence
\begin{align*}
	\setcond{x \in \aff P}{f_0(x) \ge 0,\ldots, \ f_{d-1}(x) \ge 0} = \bigl(\homog(P) \cup ( - \homog(P) )\bigr) \cap (\aff P) = P.
\end{align*}
It remains to show that $\lambda>0$ and $s$ are constructible. One can construct sequences $(\lambda_j)_{j=1}^{\infty}$ and $(l_j)_{j=1}^{\infty}$ such that 
$$
	 \setcond{(\lambda,l)}{\lambda \in \natur, \ l \in \natur, \ l \deg f(x) > \deg p_0(x) }= \setcond{(\lambda_j,l_j)}{j \in \natur}.
$$
Thus, we can proceed as follows
\begin{enumerate}[1:]
	\item Define $k_1,\ldots,k_{d-1}$ as explained above.
	\item Set $j:=1.$
	\item \label{08.06.16,15:22} Set $\lambda:=\lambda_j$ and $l:=l_j.$
	\item Determine $k_0$ from \eqref{08.06.19,09:47}.
	\item Set $f_0(x):=\lambda \, p_0(x) \, \sprod{x}{u'}^{k_0} - f(x)^{l}.$
	\item \label{08.06.16,15:23} If $f_0(x) < 0$ for some $x \in P,$ set $j:=j+1$ and go to Step~\ref{08.06.16,15:22}.
	\item Return $f_0(x)$ and stop.
\end{enumerate} 
In view Theorem~\ref{TarskiSeidenberg}, Step~\ref{08.06.16,15:23} can be implemented algorithmically. 
\end{proof}

Theorem~\ref{main:thm} is a direct consequence of Theorem~\ref{main:thm:polyt} and Proposition~\ref{bd:to:unbd}. The advantage of the proof of Theorem~\ref{main:thm} with the help of Proposition~\ref{bd:to:unbd} is that it does not use much of the structure of the ``input'' polynomials. The disadvantage is that the proof of Proposition~\ref{bd:to:unbd} is less direct, since it uses {\L}ojasiewicz's Inequality and involves an ``exhaustive'' search of appropriate parameters. 

\small

\bibliographystyle{amsalpha}
\def\cprime{$'$} \def\cprime{$'$} \def\cprime{$'$} \def\cprime{$'$}
\providecommand{\bysame}{\leavevmode\hbox to3em{\hrulefill}\thinspace}
\providecommand{\MR}{\relax\ifhmode\unskip\space\fi MR }
\providecommand{\MRhref}[2]{%
  \href{http://www.ams.org/mathscinet-getitem?mr=#1}{#2}
}
\providecommand{\href}[2]{#2}

 \begin{tabular}{l}
        \textsc{Gennadiy Averkov and Martin Henk, 
	} \\ \textsc{Universit\"atsplatz 2,} \textsc{Institut f\"ur Algebra und Geometrie,} \\ \textsc{Fakult\"at f\"ur  Mathematik,} 
        \textsc{Otto-von-Guericke-Universit\"at Magdeburg,} \\ \textsc{D-39106 Magdeburg}	\\
        \emph{e-mails}: \texttt{gennadiy.averkov@googlemail.com}	\\ 
	\phantom{\emph{e-mails:}} \texttt{martin.henk@mathematik.uni-magdeburg.de} \\
	\emph{web}: \texttt{http://fma2.math.uni-magdeburg.de/$\sim$averkov} \\
	\phantom{\emph{web}:} \texttt{http://fma2.math.uni-magdeburg.de/$\sim$henk}
    \end{tabular} 
 \end{document}